\documentclass[12pt]{amsart}
\usepackage[alphabetic]{amsrefs}
\usepackage{mathdots, blkarray}
\usepackage{multicol}
\usepackage{graphicx}
\usepackage{amscd}
\usepackage{upgreek}
\usepackage{stmaryrd}
\usepackage{longtable}
\usepackage{caption}
\usepackage[T1]{fontenc}
\usepackage{latexsym, amsmath, amssymb, amsthm}
\usepackage[svgnames]{xcolor}
\usepackage{rsfso}
\usepackage[mathscr]{eucal}
\usepackage{mathtools}
\usepackage{mathptmx} 
\usepackage{titletoc}
\usepackage{wrapfig}
\usepackage{float}
\usepackage{xypic}
\usepackage{microtype}
\usepackage{dsfont}
\usepackage{xcolor}
\usepackage{color}
\usepackage[colorlinks = true,
            linkcolor  = DarkBlue,
            urlcolor   = DarkRed,
            citecolor  = DarkGreen]{hyperref}
\usepackage{hyperref}
\allowdisplaybreaks	
\usepackage[centering, includeheadfoot, hmargin=1.0in, tmargin=0.5in, 
  bmargin=0.6in, headheight=6pt]{geometry}
  
\newtheorem{theorem}{Theorem}[section]
\newtheorem{prop}[theorem]{Proposition}

\newtheorem{lem}[theorem]{Lemma}
\newtheorem{coro}[theorem]{Corollary}

\newtheorem{thm}[theorem]{Theorem}

\newtheorem{rem}[theorem]{\rm\textsc{Remark}}
\newtheorem{exam}[theorem]{\rm\textsc{Example}}

\newcommand{\ideal}[1]{\ensuremath{\left\langle #1 \right\rangle}}

\newcommand{\oeq}[1]{\ensuremath{\overset{(\ref{#1})}{=}}}
\newcommand{\bslash}{\kern-0.1em\texttt{\scalebox{0.6}[1]{/}}\kern-0.15em \texttt{\scalebox{0.6}[1]{/}}}

\DeclareMathOperator{\Aut}{Aut}

\DeclareMathOperator{\ann}{Ann}
\DeclareMathOperator{\LM}{LM}
\DeclareMathOperator{\LC}{LC}
\DeclareMathOperator{\lcm}{lcm}

\DeclareMathOperator{\Der}{Der}
\DeclareMathOperator{\spol}{spol}
 
\newcommand{\B}{\mathcal{B}}

\newcommand{\D}{\mathcal{D}} 
\newcommand{\C}{\mathbb{C}}

\newcommand{\N}{\mathbb{N}} 
\newcommand{\K}{\mathbb{K}} 
\newcommand{\g}{\mathfrak{g}} 
\newcommand{\p}{\mathfrak{p}} 

\newcommand{\ra}{\longrightarrow}

\newcommand{\hbo}{$\hfill\Diamond$}

\begin{document}
\title{Rota-Baxter operators on $\upomega$-Lie algebras} 
\def\shorttitle{Rota-Baxter operators on $\upomega$-Lie algebras}

\author{Yin Chen}
\address{School of Mathematics and Physics, Key Laboratory of ECOFM of 
Jiangxi Education Institute, Jinggangshan University,
Ji'an 343009, Jiangxi, China \& Department of Finance and Management Science, University of Saskatchewan, Saskatoon, SK, Canada, S7N 5A7}
\email{yin.chen@usask.ca}

\author{Shan Ren}
\address{School of Mathematics and Statistics, Northeast Normal University, Changchun 130024, China}
\email{rens734@nenu.edu.cn}

\author{Jiawen Shan}
\address{School of Mathematics and Systems Science, Shenyang Normal University, Shenyang 110034, China}
\email{ShanJiawen826@outlook.com}

\author{Runxuan Zhang}
\address{Department of Mathematics and Information Technology, Concordia University of Edmonton, Edmonton, AB, Canada, T5B 4E4}
\email{runxuan.zhang@concordia.ab.ca}

\begin{abstract}
This article explores Rota-Baxter operators on finite-dimensional $\upomega$-Lie algebras over a field of characteristic not 2. We provide several methods for constructing left-symmetric algebras, $\upomega$-Lie algebras, and Hom-Lie algebras via compatible Rota-Baxter operators on a given $\upomega$-Lie algebra. We also study the geometric structures of compatible Rota-Baxter operators of weight $0$ and isometric Rota-Baxter operators of weight $1$ over the field of complex numbers. In particular, we prove that  the affine variety of all isometric Rota-Baxter operators of weight $1$ on any finite-dimensional non-Lie complex simple $\upomega$-Lie algebra  is $1$-dimensional. Furthermore, we show that for every $4$-dimensional non-Lie complex $\upomega$-Lie algebra, there always exists a nilpotent compatible Rota-Baxter operator of weight $0$ such that the induced Hom-Lie algebra is nonabelian but solvable.  
\end{abstract}

\date{\today}
\thanks{2020 \emph{Mathematics Subject Classification}. 17B38; 13P25; 17B61; 17D30.}
\keywords{Rota-Baxter operators; $\upomega$-Lie algebras; Hom-Lie algebras; left-symmetric algebras.}
\maketitle \baselineskip=16pt

\dottedcontents{section}[1.16cm]{}{1.8em}{5pt}
\dottedcontents{subsection}[2.00cm]{}{2.7em}{5pt}

\section{Introduction}
\setcounter{equation}{0}
\renewcommand{\theequation}
{1.\arabic{equation}}
\setcounter{theorem}{0}
\renewcommand{\thetheorem}
{1.\arabic{theorem}}

\noindent The theory of Rota–Baxter operators on an algebraic structure was initiated by \cite{Bax60} to explore an analytic formula in probability theory, and later developed by \cite{Rot69} with motivations from combinatorics.
Using Rota–Baxter operators to construct new algebraic structures has substantial ramifications in the study of nonassociative algebras and mathematical physics, whereas describing the structure of a generic Rota-Baxter operator occupies a central place in connecting computational ideal theory, nonassociative algebras, and algebraic geometry; see \cite{AB08,BBGN13, Das20, Uch08, GGHZ25}, and \cite{Guo12}. 
The approach due to Golubchik and Sokolov appeared in \cite[Section 3.1]{GS00}, providing a new way  in constructing left-symmetric algebras via Rota–Baxter operators on Lie algebras, serves as one of our motivational examples. The primary objective in this article is to establish connections between Rota–Baxter operators on $\upomega$-Lie algebras and other nonassociative algebras, and to characterize the geometric structures of  Rota–Baxter operators on finite-dimensional complex $\upomega$-Lie algebras.

Introduced in \cite{Nur07} as a natural generalization of Lie algebras, the concept of $\upomega$-Lie algebras was motivated by the study of symmetric $3$-tensors on real vector spaces with a Riemannian metric, particularly in the setting of isoparametric hypersurfaces in spheres in \cite{BN07}. More precisely, let $L$ be a vector space over a field $\mathbb{K}$ of characteristic $\neq 2$ and $\upomega$ be a skew-symmetric bilinear form on $L$. We say that $L$ is an \textit{$\upomega$-Lie algebra} over $\mathbb{K}$ if there exists a skew-symmetric bilinear bracket $[-,-]:L\times L\ra L$ such that the following \textit{$\upomega$-Jacobi identity} holds:
\begin{equation}\label{Jac}
[[x,y],z]+[[y,z],x]+[[z,x],y]=\upomega(x,y)\cdot z+\upomega(y,z)\cdot x+\upomega(z,x)\cdot y
\end{equation}
for all $x,y,z\in L$. During the past two decades, extensive research has been devoted to the algebraic structures, classifications, and applications of finite-dimensional complex $\upomega$-Lie algebras; see for example, \cite{CZ17, CNY23, Oub24, Oub25}, and \cite{CRSZ26}.

This article studies how Rota–Baxter operators on finite-dimensional $\upomega$-Lie algebras over $\K$ can be used to construct new algebraic structures and explores their geometric properties when $\K=\C$ is the field of complex numbers. Before presenting our main results on such constructions, let us first recall some related concepts about Rota–Baxter operators.  Suppose that $(L,[-,-],\upomega)$ denotes a finite-dimensional  $\upomega$-Lie algebra over $\K$  and $\uplambda\in \K$. A linear map $R:L\ra L$ is a \textit{Rota-Baxter operator of weight} $\uplambda$ on $L$ provided that
\begin{equation}
\label{rbc}
[R(x),R(y)]=R([R(x),y]+[x,R(y)]+\uplambda([x,y]))
\end{equation}
for all $x,y\in L$.
We write $\B_{\uplambda}(L)$ for the variety of all Rota-Baxter operators of weight $\uplambda$ on $L$ and denote by
$\B(L)$  the subvariety of all Rota-Baxter operators of weight $0$ on $L$. Clearly, the zero map belongs to $\B_{\uplambda}(L)$, thus $\B_{\uplambda}(L)\neq \emptyset$ for all $\uplambda\in \K$.

We say that a Rota-Baxter operator $R$ on $L$ is \textit{compatible} if
\begin{equation}
\label{comp}
\upomega(R(x),y)+\upomega(x,R(y))=0
\end{equation}
for all $x,y\in L$. We have seen in \cite{CZZZ18} and \cite[Section 2]{CRSZ26} that compatible linear maps on $\upomega$-Lie algebras play an important role in understanding the structures and representations of $\upomega$-Lie algebras.  Write $\B_{\uplambda}^c(L)$ for the variety of all compatible Rota-Baxter operators of weight $\uplambda$ on $L$ and $\B^c(L)$ for the subvariety of all compatible Rota-Baxter operators of weight $0$ on $L$. The fact that $0\in \B_{\uplambda}^c(L)$ implies that  $\B_{\uplambda}^c(L)$ is also nonempty for all $\uplambda\in\K$. Moreover, a Rota-Baxter operator $R$ on $L$ is called \textit{isometric} if
\begin{equation}
\label{isom}
\upomega(R(x),R(y))=\upomega(x,y)
\end{equation}
for all $x,y\in L$. Denote by $\B_{\uplambda}^i(L)$  the subvariety of all isometric Rota-Baxter operators of weight $\uplambda$ on $L$,  write $\Aut(L)$ for the automorphism group of $L$, and define 
\begin{equation}
\ker(\upomega):=\{x\in L\mid \upomega(x,y)=0,\textrm{ for all }y\in L\}.
\end{equation}

We now briefly summarize the main results of this article. The first result  in Proposition \ref{prop2.4} gives an $\upomega$-version of Golubchik-Sokolov's construction, providing a way to construct left-symmetric algebras via Rota-Baxter operators of weight $0$ on $\upomega$-Lie algebras. Our second result, stated in Theorem \ref{thm2.7}, presents a new method to construct $\upomega$-Lie algebras by capitalizing on compatible Rota-Baxter operators of a given $\upomega$-Lie algebra. Theorem \ref{thm2.12} is the third result, which offers a way to construct Hom-Lie algebras via nilpotent compatible Rota-Baxter operators of  $\upomega$-Lie algebras. These three results reveal close and unexpected connections between $\upomega$-Lie algebras, left-symmetric algebras, and Hom-Lie algebras, and further motivates us to explicitly compute compatible (of weight $0$) and isometric (of weight $1$) Rota-Baxter operators of low-dimensional complex $\upomega$-Lie algebras. An important conclusion appeared in Corollary \ref{coro3.10} shows that the affine variety $\B^i_1(L)$ of all isometric Rota-Baxter operators of weight $1$ is always 
$1$-dimensional, where $L$ denotes a finite-dimensional non-Lie simple $\upomega$-Lie algebra over $\C$.
More interestingly, as an application of Theorem \ref{thm2.12}, we obtain that if $L$ is a $4$-dimensional non-Lie $\upomega$-Lie algebra over $\C$, then there always exists a nilpotent compatible Rota-Baxter operator of weight $0$ such that the induced Hom-Lie algebra is nonabelian and solvable; see Corollary \ref{coro4.7} for details.

To derive Corollaries \ref{coro3.10} and \ref{coro4.7}, we note that all non-Lie complex $\upomega$-Lie algebras of dimension at most $4$ have been classified by \cite{CLZ14} and \cite{CZ17}. Previously, these low-dimensional $\upomega$-Lie algebras were shown to be indispensable for understanding the structures and representations of $\upomega$-Lie algebras. For instance, finite-dimensional non-Lie simple complex $\upomega$-Lie algebras must be 3-dimensional; see \cite[Theorem 7.1]{CZ17}.  
We also explicitly  compute Rota-Baxter operators of low-dimensional complex $\upomega$-Lie algebras and explore the geometric structures of the corresponding Rota-Baxter operator varieties. In particular, we will determine all irreducible components of these varieties, using classical Gr\"obner bases and techniques from computational ideal theory as our main tools. Similar methods have been applied  recently to the study of various other nonassociative algebras; see for example, \cite{CZ23, CZ24, RZ24, CZ26}, and \cite{CQRZ26}.

We organize this article as follows. Section \ref{sec2} establishes connections between Rota-Baxter operators of $\upomega$-Lie algebras and other nonassociative algebras. Proposition \ref{prop2.4}, Theorem \ref{thm2.7}, and Theorem \ref{thm2.12} are the main results in this section that illustrate how to use compatible Rota-Baxter operators of $\upomega$-Lie algebras to construct new left-symmetric algebras, $\upomega$-Lie algebras, and Hom-Lie algebras, respectively. We also
explore a close relationship between Rota-Baxter operators and invertible derivations in  Proposition \ref{prop2.1} and a link between Rota-Baxter operators and the representation theory of  $\upomega$-Lie algebras in Proposition \ref{prop2.14}. 
In Section \ref{sec3}, we first provide a procedure to compute Rota-Baxter operators of finite-dimensional complex $\upomega$-Lie algebras and use the technique from computational ideal theory to examine the geometric structure of the affine variety $\B^c(L)$ of compatible Rota-Baxter operators of weight $0$ for any $3$-dimensional non-Lie complex $\upomega$-Lie algebra $L$. As a sample of our procedure, we give all details about the computation of the case when $L=L_1$. In particular, our result shows that  $\B^c(L_1)$ is a $3$-dimensional affine variety with two $3$-dimensional irreducible components; see Corollary \ref{coro3.5}. Applying the same method, we complete all computations on $\B^c(L)$ and $\B^i_1(L)$ summarized  in Tables \ref{tab1} and \ref{tab2}, without going into details. Moreover, these computations confirm Corollary \ref{coro3.10}.  Section \ref{sec4} is devoted to computing the affine variety $\B^s(L)$ of compatible Rota-Baxter operators of weight $0$ with the condition $R^2=0$ on all $4$-dimensional complex $\upomega$-Lie algebras. We examine the nilpotency and solvability of Hom-Lie algebras induced by these Rota-Baxter operators in Table \ref{tab3} and derive Corollary \ref{coro4.7} directly from this table.

Throughout this article, we assume that $\K$ denotes an arbitrary field of characteristic $\neq 2$. Some notations are standard:   $\K^\times:=\K\setminus\{0\}$, $\C$ denotes the field of complex numbers,  $\N^+=\{1,2,3,\dots\}$ denotes the set of all positive integers, and $\N=\{0,1,2,\dots\}$ denotes the set of nonnegative integers. We always use $L$ to denote an $\upomega$-Lie algebra and write $A$ for a nonassociative algebra.  All algebras and representations are finite-dimensional over $\K$ or $\C$.

\section{General Rota-Baxter Operators} \label{sec2}
\setcounter{equation}{0}
\renewcommand{\theequation}
{2.\arabic{equation}}
\setcounter{theorem}{0}
\renewcommand{\thetheorem}
{2.\arabic{theorem}}

\noindent This section, consisting of five subsections,  provides some general properties and constructions of Rota-Baxter operators on $\upomega$-Lie algebras, showing their close connections with other nonassociative algebras such as left-symmetric algebras and Hom-Lie algebras. We especially focus on using Rota–Baxter operators on $\upomega$-Lie algebras to construct new algebraic structures. Throughout this section, we work over an arbitrary field $\K$ of characteristic $\neq 2$, unless stated otherwise. 

\subsection{Rota-Baxter operators and invertible derivations}

Exploring the existence of invertible derivations on a nonassociative algebra is a classical topic, which dates back to
Jacobson's theorem in \cite{Jac55}, demonstrating that a Lie algebra over a field of characteristic zero having an invertible derivation must be nilpotent.  

The following result provides a method for finding all automorphic invertible derivations on a given $\upomega$-Lie algebra $L$ via its automorphic Rota-Baxter operators. 

\begin{prop}\label{prop2.1}
Suppose  $L$ is finite-dimensional. Then $\B(L)\cap \Aut(L)\neq \emptyset$ if and only if $\Der(L)\cap \Aut(L)\neq \emptyset$. 
Moreover, if $\B(L)\cap \Aut(L)\neq \emptyset$, then there exists a one-to-one correspondence between 
$\B(L)\cap \Aut(L)$ and $\Der(L)\cap \Aut(L)$.
\end{prop}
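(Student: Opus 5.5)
The correspondence will be inversion, $R\mapsto R^{-1}$. Here I read $\Aut(L)$ as the set of invertible linear maps on $L$. This matches the phrase ``automorphic Rota-Baxter operators'', and finite-dimensionality is what guarantees that an injective linear map is invertible. The claim then reduces to the following: for an invertible linear map $R:L\ra L$, $R$ is a Rota-Baxter operator of weight $0$ if and only if $D:=R^{-1}$ is a derivation of $(L,[-,-])$.

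To prove this equivalence, I will substitute $x=D(u)$ and $y=D(v)$ into \eqref{rbc} with $\uplambda=0$. Since $D$ is bijective, $u$ and $v$ range over all of $L$ as $x,y$ do, so \eqref{rbc} holding for all $x,y$ is equivalent to the identity
\[
[u,v]=R\bigl([u,D(v)]+[D(u),v]\bigr)\quad\text{for all }u,v\in L.
\]
Applying the bijection $D$ to both sides, this becomes
\[
D([u,v])=[D(u),v]+[u,D(v)]\quad\text{for all }u,v\in L,
\]
which is exactly the statement that $D\in\Der(L)$. Every step is reversible. The same computation read backwards shows that if $D\in\Der(L)$ is invertible, then $D^{-1}$ satisfies \eqref{rbc} with $\uplambda=0$. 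The argument uses only bilinearity of the bracket; neither the $\upomega$-Jacobi identity nor $\upomega$ itself plays a role. Note also that a derivation here is defined with respect to the bracket alone, with no compatibility with $\upomega$ required.

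With the equivalence established, the maps $R\mapsto R^{-1}$ from $\B(L)\cap\Aut(L)$ to $\Der(L)\cap\Aut(L)$ and $D\mapsto D^{-1}$ in the reverse direction are well defined. They are mutually inverse, so they give the claimed one-to-one correspondence. In particular, one set is nonempty exactly when the other is, which proves the first assertion.

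The only point needing care is the meaning of $\Aut(L)$. If it meant bracket-preserving automorphisms, the statement would be false in general, since an invertible derivation is typically not a Lie automorphism. So the invertible-linear-map reading is the one I will adopt, and I will state this interpretation at the start of the proof. After that, the computation above is routine.
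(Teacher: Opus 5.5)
Your central computation is correct. Substituting $x=D(u)$, $y=D(v)$ with $D=R^{-1}$ shows, using only bijectivity, that an invertible linear map $R$ satisfies \eqref{rbc} with $\uplambda=0$ if and only if $R^{-1}\in\Der(L)$.

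The error is in your treatment of $\Aut(L)$. The paper defines $\Aut(L)$ as the automorphism group of $L$, i.e.\ bracket-preserving linear bijections, and under that reading the statement is true, not false. Your objection that ``an invertible derivation is typically not a Lie automorphism'' does not apply, because the right-hand set is $\Der(L)\cap\Aut(L)$, whose elements are automorphisms by hypothesis. Since $\Aut(L)$ is a group, $R\in\Aut(L)$ if and only if $R^{-1}\in\Aut(L)$. Combined with your equivalence, $R\mapsto R^{-1}$ is therefore a bijection from $\B(L)\cap\Aut(L)$ onto $\Der(L)\cap\Aut(L)$, which is exactly the paper's statement. So drop the reinterpretation, keep the paper's $\Aut(L)$, and add this one sentence.

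With that fix, your route differs from the paper's. The paper works inside $\Aut(L)$ throughout. It applies $R^{-1}$ to \eqref{rbc} and uses bracket preservation, namely $R^{-1}([R(x),R(y)])=[x,y]$ and $R^{-1}([R(x),y])=[x,R^{-1}(y)]$. It passes through the intermediate identity $[x,y]=[R(x),y]+[x,R(y)]$, and its converse likewise applies $R^{-2}$ and uses that $R^{-1}$ preserves brackets. These manipulations fail for merely invertible maps: the intermediate identity together with \eqref{rbc} would force $R$ to be a homomorphism.

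Your substitution argument needs only bijectivity. It proves the more general fact that inversion matches invertible weight-$0$ Rota--Baxter operators with invertible derivations. It also shows that the automorphism hypothesis enters only through closure of $\Aut(L)$ under inversion, and that neither $\upomega$ nor finite-dimensionality is used.
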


\begin{proof}
Given two arbitrary elements $x,y\in A$ and assume that $R\in \B(L)\cap \Aut(L)$. We claim that $R^{-1}\in \Der(L)\cap \Aut(L)$
and thus $\Der(L)\cap \Aut(L)\neq \emptyset$. In fact, it follows from (\ref{rbc}) that $$[R(x),R(y)]=R([R(x),y]+[x,R(y)]).$$ Note that $R^{-1}$ is also an automorphism of $L$, so its action on both sides implies that 
$$R^{-1}([R(x),R(y)])=[R(x),y]+[x,R(y)]$$ Simplifying this, we see that
$$[x,y]=[R(x),y]+[x,R(y)].$$ Hence,  $R^{-1}([x,y])=R^{-1}([R(x),y]+[x,R(y)])=[x,R^{-1}(y)]+[R^{-1}(x),y]$,
which means that $R^{-1}$ is a derivation of $L$. Thus $R^{-1}\in\Der(L)\cap \Aut(L)$ and the claim holds. 

Conversely, suppose that $R\in\Der(L)\cap \Aut(L)$ is an arbitrary element. Then
$R([x,y])=[x,R(y)]+[R(x),y]$ and so $R^{-2}(R([x,y]))=R^{-2}([x,R(y)]+[R(x),y])$, which can be read as
$$[R^{-1}(x),R^{-1}(y)]=R^{-1}([R^{-1}(x),y]+[x,R^{-1}(y)]).$$
Thus $R^{-1}$ is a Rota-Baxter operator on $L$. Clearly, $R^{-1}\in \B(L)\cap \Aut(L)$ and $\B(L)\cap \Aut(L)\neq \emptyset$.
This completes the proof of the first statement. 

To prove the second statement, we define a map $\varphi: \B(L)\cap \Aut(L)\ra \Der(L)\cap \Aut(L)$ by $R\mapsto R^{-1}$. 
The previous argument shows that $\varphi$ is a well-defined bijective map.
\end{proof}

\begin{rem}{\rm
Note that \cite[Lemma 2.1]{CRSZ26} shows that the compatibility of linear maps on $L$ is preserved under addition, scalar multiplication, and the commutator bracket product. Moreover, this compatibility is also preserved for automorphic derivations on $L$ (i.e., under the map $\varphi$ above).
In fact, assume that $R\in \Der(L)\cap \Aut(L)$ is compatible, then 
$$\upomega(R^{-1}(x),y)=\upomega(R^{-1}(x),R(R^{-1}(y)))=-\upomega(R(R^{-1}(x)),R^{-1}(y))=-\upomega(x,R^{-1}(y))$$
for all $x,y\in L$. This means that $R^{-1}$ is also compatible.  Hence, the above map $\varphi$ restricts to a 
one-to-one correspondence between compatible automorphic Rota-Baxter operators and
compatible automorphic derivations (both on $L$).
\hbo}\end{rem}

\begin{rem}{\rm
The statements in Proposition \ref{prop2.1} and their proofs might be generalized to the cases of other nonassociative algebras; see for example, \cite[Proposition 3.1]{LHB07} for the version of left-symmetric algebras.
\hbo}\end{rem}

\subsection{Rota-Baxter operators and left-symmetric  algebras}

Recall that a nonassociative algebra $A$ is called a \textit{left-symmetric algebra} if 
\begin{equation}
\label{pre-Lie}
(xy)z-x(yz)=(yx)z-y(xz)
\end{equation}
for all $x,y,z\in A$. Left-symmetric algebras, containing all associative algebras as a subclass, are a kind of Lie-admissible algebras that have been studied extensively in the past 50 years because of their connections with convex homogeneous cones, affine structures on Lie groups, and Lie theory;  see \cite{Bai20} for an introductory article on the structures of left-symmetric algebras.

It is well-known that Rota-Baxter operators on Lie algebras can be used to construct new left-symmetric algebras; see for example \cite[Proposition 5.2]{GLBJ16} or \cite[Section 3.1]{GS00} for the original reference. Here, we may obtain an $\upomega$-version of Golubchik-Sokolov's construction.

\begin{prop}\label{prop2.4}
Suppose that $R\in \B(L)$ such that the image of $R$ is contained in $\ker(\upomega)$. The following product  defined by
\begin{equation}
\label{eq2.2}
xy:=[R(x),y]
\end{equation} 
gives rise to a left-symmetric algebra structure on the underlying space of $L$.
\end{prop}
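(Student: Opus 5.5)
We verify the left-symmetry identity (\ref{pre-Lie}) directly from the definition (\ref{eq2.2}), the Rota-Baxter identity (\ref{rbc}) with $\uplambda=0$, and the $\upomega$-Jacobi identity (\ref{Jac}). The idea is to reproduce the classical Golubchik--Sokolov computation and check that the extra $\upomega$-terms vanish, because every element that appears in them lies in the image of $R$, hence in $\ker(\upomega)$.

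First we compute the associator. For $x,y,z\in L$,
\[
(xy)z-x(yz)=[R([R(x),y]),z]-[R(x),[R(y),z]].
\]
Swapping $x$ and $y$ and subtracting, the identity (\ref{pre-Lie}) becomes
\[
[R([R(x),y])-R([R(y),x]),z]=[R(x),[R(y),z]]-[R(y),[R(x),z]].
\]
By skew-symmetry, $[R(x),y]-[R(y),x]=[R(x),y]+[x,R(y)]$. The Rota-Baxter identity (\ref{rbc}) of weight $0$ therefore turns the left side into $[[R(x),R(y)],z]$.

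It remains to show
\[
[[R(x),R(y)],z]=[R(x),[R(y),z]]-[R(y),[R(x),z]].
\]
Set $a=R(x)$ and $b=R(y)$. Using skew-symmetry, the right side equals $-[[b,z],a]-[[z,a],b]$. So the claim is equivalent to
\[
[[a,b],z]+[[b,z],a]+[[z,a],b]=0.
\]
By (\ref{Jac}), the left side equals $\upomega(a,b)z+\upomega(b,z)a+\upomega(z,a)b$. Since $a,b\in R(L)\subseteq\ker(\upomega)$, each of the three coefficients is $0$. This proves (\ref{pre-Lie}).

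The only point needing care is the sign bookkeeping when the $\upomega$-Jacobi identity is rearranged. This is also the one place where the hypothesis $R(L)\subseteq\ker(\upomega)$ is used, and only for terms of the form $\upomega(R(\cdot),-)$. Note that bilinearity of the product is immediate, and no compatibility assumption on $R$ is needed.
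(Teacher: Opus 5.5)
Your proposal is correct and follows essentially the same route as the paper. You both use the weight-$0$ Rota--Baxter identity to turn $(xy)z-(yx)z$ into $[[R(x),R(y)],z]$, then rewrite $x(yz)-y(xz)$ by skew-symmetry so that the $\upomega$-Jacobi identity applies, and finally kill the $\upomega$-terms using $R(L)\subseteq\ker(\upomega)$ (for $\upomega(z,R(x))$, together with skew-symmetry of $\upomega$).
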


\begin{proof}
Given arbitrary elements $x,y,z\in L$, it follows (\ref{eq2.2}) that  $(xy)z=([R(x),y])z=[R([R(x),y]),z].$
Switching $x$ and $y$ observes that $(yx)z=[R([R(y),x]),z].$ Thus
$$(xy)z-(yx)z=[R([R(x),y]-[R(y),x]),z]=[R([R(x),y]+[x,R(y)]),z]\oeq{rbc} [[R(x),R(y)],z].$$
Similarly, $y(xz)=y([R(x),z])=[R(y),[R(x),z]]=[[z,R(x)],R(y)]$ and $x(yz)=[[z,R(y)],R(x)]$. Thus
$$y(xz)-x(yz)=[[z,R(x)],R(y)]+[[R(y),z],R(x)].$$
Moreover,
\begin{eqnarray*}
(xy)z-(yx)z+y(xz)-x(yz)& = & [[R(x),R(y)],z]+[[z,R(x)],R(y)]+[[R(y),z],R(x)] \\
 & \oeq{Jac} & \upomega(R(x),R(y))\cdot z+ \upomega(z,R(x))\cdot R(y)+ \upomega(R(y),z)\cdot R(x)\\
 &=&0
\end{eqnarray*}
where the last equation holds because the image of $R$ is contained in $\ker(\upomega)$. Hence,
(\ref{eq2.2}) produces a new left-symmetric algebra on  $L$.
\end{proof}

The following example shows that our $\upomega$-version of Golubchik-Sokolov's construction obtained above is nontrivial. 

\begin{exam}\label{exam2.5}
{\rm
Consider the $3$-dimensional  $\upomega$-Lie algebra $L_1$ over $\C$ that appeared in \cite[Theorem 2]{CLZ14}, which can be generated by $\{x,y,z\}$ subject to the following relations:
$$[x,y]=y,[x,z]=0,[y,z]=z\textrm{ and }\upomega(x,y)=1, \upomega(x,z)=\upomega(y,z)=0.$$
We will see in Proposition \ref{prop3.2} that $$R=\begin{pmatrix}
    0 & 0 &d   \\
    0& 0 &e   \\
    0& 0 &0   \\
\end{pmatrix}\in \B^c(L_1)$$
for all $d,e\in\C$. Note that $R(x)=d\cdot z, R(y)=e\cdot z$, and $R(z)=0$, thus the image of $R$ is contained in the subspace 
 $\ideal{z}=\ker(\upomega)$. By Proposition \ref{prop2.4}, the new product (\ref{eq2.2}) gives the following 3-dimensional left-symmetric algebra over $\C$ defined by
 $$xy=-d\cdot z,\quad y^2=-e\cdot z,\quad x^2=yx=xz=zx=yz=zy=z^2=0$$
 which actually is isomorphic to ``$H_5$'' in \cite[Proposition 3.4]{Bai09}.
Note that a complete classification of all 3-dimensional complex left-symmetric algebras has been derived by \cite[Section 3.6]{Bai09}. 
\hbo}\end{exam}

\begin{rem}\label{rem2.6}
{\rm
The construction in Proposition \ref{prop2.4} will demonstrate increasing effectiveness as the dimension of the $\upomega$-Lie algebra $L$ grows, because the dimension of $\ker(\upomega)$ could be large. For example, consider the $4$-dimensional non-Lie complex $\omega$-Lie algebra $L_{1,1}$ appeared in \cite[Section 2]{CZ17}, spanned by $\{e,x,y,z\}$ with the following nontrivial generating relations:
$$[e,y]=-e, ~ [x,y]=y, ~ [y,z]=z,~ \upomega(x,y)=1.$$
Clearly, $\ker(\upomega)=\ideal{e,z}$. A direct computation verifies that
$$R=\begin{pmatrix}
   a   & 0&0&-b   \\
   -\frac{ad}{b}   & 0&0&d   \\
      r   & 0&0&s  \\
    \frac{a^2}{b}   & 0&0&-a   \\
\end{pmatrix}$$
is a Rota-Baxter operator on $L_{1,1}$ for all $a,b,d,r,s\in\C$ and $b\in\C^\times$. Since $R(L_{1,1})$ is contained in the subspace spanned by $\{e,z\}$, choosing different parameters $a,b,d,r,s$ and applying Proposition \ref{prop2.4} may produce some new 4-dimensional complex left-symmetric algebras.
\hbo}\end{rem}

\subsection{Rota-Baxter operators and new $\upomega$-Lie algebras}

We also can use Rota-Baxter operators on an $\upomega$-Lie algebra to construct new $\upomega$-Lie algebra structures on the same underlying space (but different $\upomega$). 

Suppose that $R$ denotes a compatible Rota-Baxter operator of weight 0 on an $\upomega$-Lie algebra  $L$. We define a new bracket product $[-,-]_R$ on the underlying space of $L$ by
\begin{equation}
\label{eq2.3}
[x,y]_R:=[R(x),y]+[x,R(y)]
\end{equation}
and define a new skew-symmetric bilinear form $\upomega_R$ by
\begin{equation}
\label{eq2.4}
\upomega_R(x,y):=\upomega(R(x),R(y))
\end{equation}
for all $x,y\in L$. Clearly, the bracket product $[-,-]_R$ is skew-symmetric, because
$$[x,y]_R+[y,x]_R=[R(x),y]+[x,R(y)]+[R(y),x]+[y,R(x)]=0.$$
Furthermore, 

\begin{thm}\label{thm2.7}
The bracket $[-,-]_R$, together with the bilinear form $\upomega_R(-,-)$, defines an $\upomega$-Lie algebra structure on the underlying space of $L$ $($denoted by $L_R$$)$. 
\end{thm}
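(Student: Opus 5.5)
The plan is to verify the $\upomega_R$-Jacobi identity for $[-,-]_R$ directly, by rewriting everything in terms of the original bracket and using three facts: the Rota-Baxter identity (\ref{rbc}) with $\uplambda=0$, the $\upomega$-Jacobi identity (\ref{Jac}), and compatibility (\ref{comp}). Skew-symmetry of $[-,-]_R$ was already noted before the statement. The form $\upomega_R$ is clearly bilinear, and it is skew-symmetric because $\upomega$ is. So only the Jacobi identity remains.

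The key observation is that (\ref{rbc}) with weight $0$ says exactly
$$R([x,y]_R)=[R(x),R(y)].$$
Hence
$$[[x,y]_R,z]_R=[R([x,y]_R),z]+[[x,y]_R,R(z)]=[[R(x),R(y)],z]+[[R(x),y],R(z)]+[[x,R(y)],R(z)].$$
Summing cyclically over $(x,y,z)$ gives nine terms. I will regroup them into three cyclic Jacobi sums of the original bracket, namely for the triples $(R(x),R(y),z)$, $(R(y),R(z),x)$ and $(R(z),R(x),y)$. For instance, the first triple collects $[[R(x),R(y)],z]$, $[[R(y),z],R(x)]$ and $[[z,R(x)],R(y)]$. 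Checking that each of the nine terms lands in exactly one of these three sums is the one bookkeeping step that must be done carefully; I expect it to work out by direct inspection.

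Applying (\ref{Jac}) to each of the three triples, the right-hand side becomes
$$\upomega(R(x),R(y))z+\upomega(R(y),R(z))x+\upomega(R(z),R(x))y$$
plus terms proportional to $R(x)$, $R(y)$ and $R(z)$. The coefficient of $R(x)$ is $\upomega(R(y),z)+\upomega(y,R(z))$, which vanishes by (\ref{comp}); the coefficients of $R(y)$ and $R(z)$ vanish in the same way. What remains is exactly
$$\upomega_R(x,y)z+\upomega_R(y,z)x+\upomega_R(z,x)y,$$
which is the $\upomega_R$-Jacobi identity.

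The only real obstacle is the regrouping into three Jacobi triples and matching the residual $\upomega$-terms pairwise so that compatibility kills them. Everything else is immediate from (\ref{rbc}).
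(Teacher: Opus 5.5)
Your proposal is correct and matches the paper's proof essentially step for step: you expand $[[x,y]_R,z]_R$ using $R([x,y]_R)=[R(x),R(y)]$, regroup the nine terms into the $\upomega$-Jacobi sums for $(R(x),R(y),z)$ and its cyclic shifts, and let compatibility kill the coefficients of $R(x)$, $R(y)$, $R(z)$. The regrouping you flagged does check out, with each of the nine terms landing in exactly one of the three triples.
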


\begin{proof}
Given three arbitrary elements $x,y,z$ in $L$, it follows from (\ref{eq2.3}) that
\begin{eqnarray*}
[[x,y]_R,z]_R&=&[[R(x),y],z]_R+[[x,R(y)],z]_R \\
 & = & [R([R(x),y]),z]+[[R(x),y],R(z)]+[R([x,R(y)]),z]+[[x,R(y)],R(z)] \\
 &=&([R([R(x),y])+R([x,R(y)]),z])+[[R(x),y],R(z)]+[[x,R(y)],R(z)]\\
 &\oeq{rbc}&[[R(x),R(y)],z]+[[R(x),y],R(z)]+[[x,R(y)],R(z)].
\end{eqnarray*}
Similarly, we see that
\begin{eqnarray*}
~[[y,z]_R,x]_R&=&[[R(y),R(z)],x]+[[R(y),z],R(x)]+[[y,R(z)],R(x)], \\
~[[z,x]_R,y]_R&=&[[R(z),R(x)],y]+[[R(z),x],R(y)]+[[z,R(x)],R(y)].
\end{eqnarray*}
Using the $\upomega$-Jacobi identity (\ref{Jac}), we observe that
\begin{eqnarray*}
&&\hspace{-0.2cm} [[x,y]_R,z]_R+ [[y,z]_R,x]_R+[[z,x]_R,y]_R\\
& = &\hspace{-0.2cm} (\upomega(R(x),y)+\upomega(x,R(y)))\cdot R(z)+(\upomega(R(y),z)+\upomega(y,R(z)))\cdot R(x)+\\
&&\hspace{-0.2cm}(\upomega(R(z),x)+\upomega(z,R(x)))\cdot R(y)+  \upomega(R(x),R(y))\cdot z+ \upomega(R(y),R(z))\cdot x + \upomega(R(z),R(x))\cdot y\\
&\oeq{comp}&\hspace{-0.2cm}\upomega(R(x),R(y))\cdot z+\upomega(R(y),R(z))\cdot x + \upomega(R(z),R(x))\cdot y\\
&=&\hspace{-0.2cm} \upomega_R(x,y)\cdot z+\upomega_R(y,z)\cdot x + \upomega_R(z,x)\cdot y.
\end{eqnarray*} 
Therefore, the product $[-,-]_R$ gives rise to an $\upomega$-Lie algebra structure on $L$ and we may denote by $L_R$ for 
this $\upomega$-Lie algebra.
\end{proof}

The following example illustrates this construction. 

\begin{exam}{\rm
Let us consider the 4-dimensional complex $\upomega$-Lie algebra $\widetilde{A}_{\upalpha}$ spanned by $\{e,x,y,z\}$ that has appeared in Example \cite[Section 4]{CZ17}. This $\upomega$-Lie algebra is defined by the following nonzero generating relations:
$$[e,z]=e,~[x,y]=x,~[x,z]=x+y,~[y,z]=\upalpha x+z,~ \upomega(y,z)=-1.$$
Suppose that $\upalpha=-\frac{1}{4}$. A tedious but direct computation shows  that
$$R=\begin{pmatrix}
-a&0&0&-\frac{a^2}{b}\\
4b&\frac{c}{2}&c&4a-2c\\
0&-\frac{c}{4}&-\frac{c}{2}&c\\
b&0&0&a
\end{pmatrix}$$
with $a,c\in\C$ and $b\in\C^\times$, is a compatible Rota-Baxter operator of weight 0. In particular, we have
\begin{equation*} 
\begin{aligned}
R(e)&=-ae-\frac{a^2}{b}z &\quad R(x)&=4be+\frac{c}{2}x+cy+(4a-2c)z\\
R(y)&=-\frac{c}{4}x-\frac{c}{2}y+cz &\quad R(z) &=be+az.
\end{aligned}
\end{equation*}
Hence, using this action, we see that
\begin{eqnarray*}
[e,x]_R&=&[R(e),x]+[e,R(x)]=\left[-ae-\frac{a^2}{b}z,x\right]+\left[e,4be+\frac{c}{2}x+cy+(4a-2c)z\right]\\
&=& (4a-2c)e+\frac{a^2}{b}(x+y).
\end{eqnarray*}
Similarly, 
\begin{equation*} 
\begin{aligned}
~[e,y]_R & = -\frac{a^2}{4b}x+\frac{a^2}{b}z+ce&\hspace{-1.3cm}[e,z]_R& = 0 &\quad [x,y]_R& = \left(\frac{3c}{2}-a\right)x+cy+(2c-4a)z\\
~[x,z]_R& = 4be+\left(a+\frac{c}{4}\right)x+\left(a+\frac{c}{2}\right)y+cz &~&~& [y,z]_R& = -\frac{2a+c}{8}x-\frac{c}{4}y+\left(a-\frac{c}{2}\right)z.
\end{aligned}
\end{equation*}
Moreover, $\upomega_R(e,x)=\upomega(R(e),R(x))=\upomega\left(-ae-\frac{a^2}{b}z, 4be+\frac{c}{2}x+cy+(4a-2c)z\right)=-\frac{a^2c}{b}$ and
\begin{equation*} 
\begin{aligned}
\upomega_R(e,y)&=\frac{ac}{2b}&\quad \upomega_R(e,z)&=0 &\quad \upomega_R(x,y)&=-2ac\\
\upomega_R(x,z)&=-ac &\upomega_R(y,z)&=\frac{ac}{2}.&
\end{aligned}
\end{equation*}
Apparently, we obtain a non-Lie $\upomega$-Lie algebra when  $a$ and $c$ are nonzero.  
\hbo}\end{exam}

\begin{rem}{\rm
It should be noted that not all compatible Rota-Baxter operators of weight zero yield non-Lie $\upomega$-Lie algebras through the construction described in Theorem \ref{thm2.7}. For example, if we continue to consider the 3-dimensional complex $\upomega$-Lie algebra $L_1$, which previously appeared in Example \ref{exam2.5}, then we will see in Proposition  \ref{prop3.2} later that
$$R=\begin{pmatrix}
   -a   & b& d   \\
    c   & a& e   \\  
    0&0&0
\end{pmatrix}$$
with $a^2+bc=0$ and $ad=be$, is a generic compatible Rota-Baxter operator of weight 0. This means that 
the action of $R$ on $L_1$ can be given by
$$R(x)=-ax+by+dz,~ R(y)=cx+ay+ez,~\textrm{and }R(z)=0.$$
Thus $[x,y]_R=[R(x),y]+[x,R(y)]=[-ax+by+dz,y]+[x,cx+ay+ez]=-ay-dz+ay=-dz$. Similarly, 
$[x,z]_R=bz$ and $[y,z]_R=az$. 
Moreover, $\upomega_R(x,y)=\upomega(R(x),R(y))=\upomega(-ax+by+dz, cx+ay+ez)=-a^2-bc=-(a^2+bc)=0$, because 
 $a^2+bc=0$. As $R(z)=0$, we see that $$\upomega_R(x,z)=\upomega_R(y,z)=0.$$ Hence, the $\upomega$-Lie algebra $(L_1)_R$ is actually a trivial $\upomega$-Lie algebra (i.e.,  a Lie algebra). 
 
 It is well-known that  all 3-dimensional Lie algebras over $\C$ have been classified up to isomorphism, see for example, \cite[Section 1.4]{Jac79}. However, in higher-dimensional cases, 
Theorem \ref{thm2.7} provides a  potential method for constructing new Lie algebras from compatible Rota-Baxter operators on non-Lie $\upomega$-Lie algebras.
\hbo}\end{rem}

\begin{prop}\label{prop2.10}
If $R\in\B^c(L)$, then $R\in\B^c(L_R)$.
\end{prop}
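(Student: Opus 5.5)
We must verify two things for $L_R=(L,[-,-]_R,\upomega_R)$: that $R$ satisfies the weight-$0$ Rota-Baxter identity with respect to $[-,-]_R$, and that $R$ is compatible with respect to $\upomega_R$. Theorem \ref{thm2.7} already guarantees that $L_R$ is an $\upomega$-Lie algebra, so only these two identities need checking.

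\emph{Rota-Baxter identity.} We need
$$[R(x),R(y)]_R=R\bigl([R(x),y]_R+[x,R(y)]_R\bigr)$$
for all $x,y\in L$. Expanding with (\ref{eq2.3}), the left side is $[R^2(x),R(y)]+[R(x),R^2(y)]$. On the right, $[R(x),y]_R+[x,R(y)]_R=[R^2(x),y]+2[R(x),R(y)]+[x,R^2(y)]$. We apply (\ref{rbc}) (weight $0$) to the pair $(R(x),y)$, which gives $R([R^2(x),y]+[R(x),R(y)])=[R^2(x),R(y)]$, and to the pair $(x,R(y))$, which gives $R([R(x),R(y)]+[x,R^2(y)])=[R(x),R^2(y)]$. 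Summing these two equations, the right side becomes exactly $[R^2(x),R(y)]+[R(x),R^2(y)]$, which matches the left side. The only care needed is splitting the middle term $2[R(x),R(y)]$ evenly between the two applications of (\ref{rbc}).

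\emph{Compatibility.} We need $\upomega_R(R(x),y)+\upomega_R(x,R(y))=0$, that is,
$$\upomega(R^2(x),R(y))+\upomega(R(x),R^2(y))=0.$$
This follows from (\ref{comp}) applied to the pair $(R(x),R(y))$: $\upomega(R(R(x)),R(y))+\upomega(R(x),R(R(y)))=0$.

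There is no real obstacle beyond the bookkeeping in the first step. Together with Theorem \ref{thm2.7}, the two identities show $R\in\B^c(L_R)$.
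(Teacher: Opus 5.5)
Your proof is correct and follows the paper's argument essentially line for line. You expand both sides with the bracket $[-,-]_R$, apply the weight-$0$ Rota--Baxter identity to the pairs $(R(x),y)$ and $(x,R(y))$, and obtain compatibility from the original compatibility condition applied to $(R(x),R(y))$.
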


\begin{proof}
We first need to verify (\ref{rbc}) for $\uplambda=0$ and  all $x,y\in L$. It follows from (\ref{eq2.3}) that
$$[R(x),R(y)]_R=[R^2(x),R(y)]+[R(x),R^2(y)].$$
Similarly, 
\begin{eqnarray*}
R([R(x),y]_R)&=&R([R^2(x),y]+[R(x),R(y)])\oeq{rbc}[R^2(x),R(y)]\\
R([x,R(y)]_R)&=&R([R(x),R(y)]+[x,R^2(y)])\oeq{rbc}[R(x),R^2(y)].
\end{eqnarray*}
Hence, $[R(x),R(y)]_R=R([R(x),y]_R)+R([x,R(y)]_R)=R([R(x),y]_R+[x,R(y)]_R)$, for all $x,y\in L$. 
Namely, $R$ is a Rota-Baxter operator of weight 0 on $L_R$. Moreover, since
$$\upomega_R(R(x),y)+\upomega_R(x,R(y))\oeq{eq2.4}\upomega(R^2(x),R(y))+\upomega(R(x),R^2(y))\oeq{comp}0,$$
we see that $R$ is compatible. Therefore, $R\in\B^c(L_R)$.
\end{proof}

\begin{rem}{\rm
By Proposition \ref{prop2.10}, iterative application of the construction given in Theorem 
\ref{thm2.7}, yields further $\upomega$-Lie algebras. More precisely, we define
$$L_0:=(L,[-,-],\upomega)$$
$$L_i:=(L_{i-1},[-,-]_{R^i},\upomega_{R^i}),$$
where $R^i$ denotes the composition of $i$ copies of $R$ for all $i\in\N^+$.
A similar construction for Rota-Baxter operators on left-symmetric algebras can be found in \cite[Section 4]{LHB07}. Moreover, a method of using linear maps  on an algebraic structure and their eigenvectors to construct new Lie algebras and left-symmetric algebras seems to be a recent trend in the structure theory of nonassociative algebras; see for example, \cite{DJ23, DW24, Che25}, and \cite{DM25}.
\hbo}\end{rem}

\subsection{Rota-Baxter operators and Hom-Lie algebras}

Recall that a \textit{Hom-Lie algebra} is a skew-symmetric nonassociative algebra $(\g,[-,-])$ with a linear map $\upalpha:\g\ra\g$ such that
\begin{equation}
\label{ }
[[x,y],\upalpha(z)]+[[y,z],\upalpha(x)]+[[z,x],\upalpha(y)]=0
\end{equation}
for all $x,y,z\in\g$. Hom-algebraic structures have become a popular topic in nonassociative algebras and mathematical physics since the work in \cite{LS05}; see for example, \cite{BD25, CZ23, GL21}, and \cite{GZZ18} for some new developments on this topic. 

\begin{thm}\label{thm2.12}
Suppose that $R\in\B^c(L)$ with $R^2=0$. The skew-symmetric bracket $[-,-]_R$ in $(\ref{eq2.3})$, together with 
$$\upalpha:=R,$$
defines a Hom-Lie algebra structure on $L$.
\end{thm}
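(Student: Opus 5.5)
We need to show $[[x,y]_R,R(z)]_R+\text{cyclic}=0$. The plan is to expand each term with the computation already carried out in the proof of Theorem \ref{thm2.7}, but with $z$ replaced by $R(z)$, and then let $R^2=0$ remove most of the resulting terms.

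\emph{Expanding one term.} Write $u=[x,y]_R$, so that $[u,R(z)]_R=[R(u),R(z)]+[u,R^2(z)]=[R(u),R(z)]$, since $R^2=0$. By (\ref{rbc}) with $\uplambda=0$ we have $R(u)=R([R(x),y]+[x,R(y)])=[R(x),R(y)]$. Hence
$$[[x,y]_R,R(z)]_R=[[R(x),R(y)],R(z)].$$
The same substitution in the proof of Theorem \ref{thm2.7} gives this directly: there the terms $[[R(x),y],R^2(z)]$ and $[[x,R(y)],R^2(z)]$ vanish.

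\emph{Summing cyclically.} Adding the three cyclic terms and applying the $\upomega$-Jacobi identity (\ref{Jac}) to the elements $R(x),R(y),R(z)$ gives
$$\upomega(R(x),R(y))\,R(z)+\upomega(R(y),R(z))\,R(x)+\upomega(R(z),R(x))\,R(y).$$

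\emph{Using compatibility.} Each coefficient vanishes by (\ref{comp}). For instance, $\upomega(R(x),R(y))=-\upomega(x,R^2(y))=0$, and the other two coefficients are handled in the same way. So the cyclic sum is $0$, which is the Hom-Lie identity with $\upalpha=R$. Skew-symmetry of $[-,-]_R$ was already noted before Theorem \ref{thm2.7}.

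The argument is short. The only point needing care is the first step: $R^2=0$ kills the second summand of $[u,R(z)]_R$, and the Rota-Baxter identity collapses $R(u)$ to $[R(x),R(y)]$. I do not expect any real obstacle beyond that.
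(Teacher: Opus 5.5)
Your proposal is correct and follows essentially the same route as the paper's proof. Both arguments reduce each cyclic term to $[[R(x),R(y)],R(z)]$ using $R^2=0$ and the weight-$0$ Rota-Baxter identity, then apply the $\upomega$-Jacobi identity to $R(x),R(y),R(z)$ and kill the coefficients via compatibility, e.g.\ $\upomega(R(x),R(y))=-\upomega(x,R^2(y))=0$.
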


\begin{proof}
Given any three elements $x,y,z\in L$, it follows from (\ref{eq2.3}) that
\begin{eqnarray*}
[[x,y]_R,R(z)]_R & = & [[R(x),y]+[x,R(y)],R(z)]_R  \\
& = & [[R(x),y],R(z)]_R+[[x,R(y)],R(z)]_R\\
&=& [R[R(x),y],R(z)]+[[R(x),y],R^2(z)]+[R[x,R(y)],R(z)]+[[x,R(y)],R^2(z)]\\
&=&[R[R(x),y]+R[x,R(y)],R(z)]+[[R(x),y]+[x,R(y)],R^2(z)]\\
&\oeq{rbc}& [[R(x),R(y)],R(z)]+[[R(x),y]+[x,R(y)],R^2(z)]\\
&=&[[R(x),R(y)],R(z)]
\end{eqnarray*}
where the last equality holds because $R^2=0$. Similarly, we have
$$[[y,z]_R,R(x)]_R=[[R(y),R(z)],R(x)]\textrm{ and }[[z,x]_R,R(y)]_R=[[R(z),R(x)],R(y)].$$
Hence, 
\begin{eqnarray*}
&& [[x,y]_R,R(z)]_R+[[y,z]_R,R(x)]_R+[[z,x]_R,R(y)]_R\\
 & = & [[R(x),R(y)],R(z)]+[[R(y),R(z)],R(x)]+ [[R(z),R(x)],R(y)]\\
 & \oeq{Jac} &  \upomega(R(x),R(y))\cdot R(z)+ \upomega(R(y),R(z))\cdot R(x)+ \upomega(R(z),R(x))\cdot R(y)\\
 &\oeq{comp}& -\upomega(x,R^2(y))\cdot R(z)- \upomega(y,R^2(z))\cdot R(x)- \upomega(z,R^2(x))\cdot R(y)\\
 &=&-\upomega(x,0)\cdot R(z)- \upomega(y,0)\cdot R(x)- \upomega(z,0)\cdot R(y)=0.
\end{eqnarray*}
This means that $(L,[-,-]_R,R)$ is a Hom-Lie algebra. 
\end{proof}

\begin{exam}{\rm
Consider the $3$-dimensional complex $\upomega$-Lie algebra $L_2$ appeared in \cite[Theorem 2]{CLZ14}, which is spanned  by $\{x,y,z\}$ subject to the following nonzero relations:
$$[x,z]=y,[y,z]=z,\textrm{ and }\upomega(x,z)=1.$$
We will see in Table \ref{tab1} of Section \ref{sec3} that $$R=\begin{pmatrix}
    0 & a &b   \\
    0& 0 &0   \\
    0& 0 &0   \\
\end{pmatrix}\in \B^c(L_2)$$
for all $a,b\in\C$.  Clearly, $R^2=0$. By Theorem \ref{thm2.12}, the triple $(L_2,[-,-]_R,R)$ forms a Hom-Lie algebra, 
where $[x,y]_R=-bz, [x,z]_R=az$ and $[y,z]_R=0.$ 

Note that two recent works have classified the variety of all $3$-dimensional complex Hom-Lie algebras and explored the associated geometric structures; see  \cite{AV21} and \cite{FCR23}.
\hbo}\end{exam}

\subsection{Rota-Baxter operators and representations of $\upomega$-Lie algebras}

We close this section by establishing a connection between Rota-Baxter operators on an $\upomega$-Lie algebra $L$ and its representations; see \cite[Section 6]{CZZZ18} and \cite{Zha21} for fundamental results on the representation theory of $\upomega$-Lie algebras.

Recall that a finite-dimensional vector space $V$ over $\K$ is called an \textit{$L$-module} if there exists a bilinear map $L\times V\ra V$
given by $(x,v)\mapsto x\cdot v$ such that
\begin{equation}
\label{eq2.6}
[x,y]\cdot v=x\cdot (y\cdot v)-y\cdot (x\cdot v)+\upomega(x,y)\cdot v
\end{equation}
for all $x,y\in L$ and $v\in V$.

Suppose that $V$ is an $L$-module and $R$ is an isometric Rota-Baxter operator of weight 1 on $L$. We write $\ann_L(V)$ for the annihilator of $L$ in $L$, i.e.,
$\ann_L(V):=\{x\in L\mid x\cdot v=0,\textrm{ for all }v\in V\}$ and define a new bilinear map from $L\times V$ to $V$ by
\begin{equation}
\label{eq2.7}
x\ast v:=R(x)v
\end{equation}
for all $x\in L$ and $v\in V$.

\begin{prop}\label{prop2.14}
Suppose that the image of $[R(L),L]$ under $R$ is contained in $\ann_L(V)$. The product $\ast$ in $(\ref{eq2.7})$ 
defines an $L$-module structure on $V$.
\end{prop}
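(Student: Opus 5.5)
We have to verify (\ref{eq2.6}) for the new action, i.e.
\[
[x,y]\ast v = x\ast(y\ast v) - y\ast(x\ast v) + \upomega(x,y)\cdot v ,
\]
which reads $R([x,y])\cdot v = R(x)\cdot(R(y)\cdot v) - R(y)\cdot(R(x)\cdot v) + \upomega(x,y)\cdot v$. We will start from the right-hand side and apply the original module identity (\ref{eq2.6}) to the pair $R(x),R(y)$. This gives
\[
R(x)\cdot(R(y)\cdot v) - R(y)\cdot(R(x)\cdot v) = [R(x),R(y)]\cdot v - \upomega(R(x),R(y))\cdot v .
\]

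Next we use isometry (\ref{isom}), $\upomega(R(x),R(y))=\upomega(x,y)$. The $\upomega$-terms then cancel exactly, and the right-hand side becomes $[R(x),R(y)]\cdot v$. So it suffices to show that
\[
[R(x),R(y)]\cdot v = R([x,y])\cdot v .
\]

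For this we apply the Rota--Baxter identity (\ref{rbc}) with $\uplambda=1$:
\[
[R(x),R(y)] = R([R(x),y]) + R([x,R(y)]) + R([x,y]).
\]
The first two summands lie in $R([R(L),L])$, since $[x,R(y)]=-[R(y),x]\in[R(L),L]$. By hypothesis they lie in $\ann_L(V)$ and act trivially on $V$. Hence $[R(x),R(y)]\cdot v=R([x,y])\cdot v$, which completes the verification. Bilinearity of $\ast$ is immediate from the linearity of $R$ and the bilinearity of the original action.

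The only delicate point is reading the hypothesis correctly. ``The image of $[R(L),L]$ under $R$'' should be taken as the span of all $R([R(x),y])$, so that it absorbs both cross terms. Skew-symmetry handles the $[x,R(y)]$ term. Otherwise the argument is a direct three-line computation, and the main care needed is sign bookkeeping in applying (\ref{eq2.6}) to $R(x),R(y)$.
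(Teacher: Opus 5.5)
Your proof is correct and follows the paper's argument essentially step for step. Both apply $(\ref{eq2.6})$ to the pair $R(x),R(y)$, cancel the $\upomega$-terms using isometry, and then use the weight-$1$ Rota--Baxter identity so that the leftover term $R([R(x),y]+[x,R(y)])\cdot v$ vanishes by hypothesis. Your use of skew-symmetry to place each cross term in $R([R(L),L])$ only makes the paper's final step a little more explicit.
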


\begin{proof}
It suffices to verify (\ref{eq2.6}) holds for the new product $\ast$. For arbitrary $x,y\in L$ and $v\in V$, note that $x\ast(y\ast v)=x\ast (R(y)\cdot v)=R(x)\cdot (R(y)\cdot v)$ and similarly, $y\ast(x\ast v)=R(y)\cdot (R(x)\cdot v)$, thus
\begin{eqnarray*}
x\ast(y\ast v)-y\ast(x\ast v)+\upomega(x,y)v& = & R(x)\cdot (R(y)\cdot v)-R(y)\cdot (R(x)\cdot v)+\upomega(x,y)\cdot v \\
 & \oeq{eq2.6} & [R(x),R(y)]\cdot v-\upomega(R(x),R(y))\cdot v+\upomega(x,y)\cdot v. 
\end{eqnarray*}
As $R$ is isometric, $\upomega(R(x),R(y))=\upomega(x,y)$ and we see that 
$$x\ast(y\ast v)-y\ast(x\ast v)+\upomega(x,y)v=[R(x),R(y)]\cdot v.$$
Moreover, it follows from (\ref{rbc}) that $[R(x),R(y)]-R([x,y])=R([R(x),y]+[x,R(y)])$. Hence,
\begin{eqnarray*}
x\ast(y\ast v)-y\ast(x\ast v)+\upomega(x,y)v-[x,y]\ast v& = &  [R(x),R(y)]\cdot v- R([x,y])\cdot v\\
&=&([R(x),R(y)]- R([x,y]))\cdot v\\
&=&R([R(x),y]+[x,R(y)])\cdot v
\end{eqnarray*}
which is equal to zero because the image of $[R(L),L]$ under $R$ is contained in $\ann_L(V)$. Therefore,
$$[x,y]\ast v=x\ast(y\ast v)-y\ast(x\ast v)+\upomega(x,y)v$$
and the product $\ast$ gives an $L$-module structure on $V$.
\end{proof}

\section{Rota-Baxter Operators on $3$-dimensional $\upomega$-Lie Algebras}  \label{sec3}
\setcounter{equation}{0}
\renewcommand{\theequation}
{3.\arabic{equation}}
\setcounter{theorem}{0}
\renewcommand{\thetheorem}
{3.\arabic{theorem}}

\noindent In this section, we explicitly compute all compatible Rota-Baxter operators of weight 0 and all isometric Rota-Baxter operators of weight 1 on $3$-dimensional $\upomega$-Lie algebras over $\C$ in terms of computational algebraic geometry.  Note that $3$-dimensional complex $\upomega$-Lie algebras have been classified by
\cite[Theorem 2]{CLZ14} into two families ($A_{\upalpha}$ and $C_{\upalpha}$) and three  exceptional $\upomega$-Lie algebras ($L_1,L_2$, and $B$).

Given an $n$-dimensional non-Lie $\upomega$-Lie algebra $L$ over $\C$, assume that $\{e_1,e_2,\dots,e_n\}$ denotes a basis of $L$. We may take the following procedure to obtain an explicit description on elements in $\B_{\uplambda}(L)$ and analyze the geometric structure of $\B_{\uplambda}(L)$:

\begin{enumerate}
  \item Compute all nonzero generating relations among these $e_i$ and determine the values of $\upomega(e_i,e_j)$ for all $i,j\in\{1,\dots,n\}$;
  \item Consider a generic $n\times n$ complex matrix $R=(x_{ij})$ and define the action of $R$ on $L$ by:
  $$R(e_i):=\sum_{j=1}^n x_{ij}\cdot e_j$$
   We define $V(L):=\{(e_i,e_j)\mid 1\leqslant i,j\leqslant n\}$;
  \item Verify (\ref{rbc}) for all $(e_i,e_j)\in V(L)$ and use the linear independence of $\{e_1,\dots,e_n\}$ to obtain finitely many equations  involving $x_{ij}$.  Write $E$ for the set of all such equations;
  \item Use computational ideal theory to solve the system formed by all equations in $E$. Note that we need to make the number of indeterminates as less as possible;
  \item Choosing some suitable  $x_{ij}$ to eliminate other $x_{ij}$ gives us an explicit description on the  matrix form of $R$;
  \item Use the Gr\"obner basis method in computational ideal theory to find all irreducible components of $\B_{\uplambda}(L)$.
\end{enumerate}

\begin{rem}\label{rem3.1}
{\rm
To calculate  $B^c(L)$, we need to set $\uplambda=0$ and add the additional condition (\ref{comp})  into Step 
(3) of the above procedure. Namely, together with the same Steps (1), (2), (4), (5), replacing Step (3) by
\begin{enumerate}
  \item[(3')] Verify (\ref{rbc}) with $\uplambda=0$ and (\ref{comp}) for all $(e_i,e_j)\in V(L)$ and use the linear independence of $\{e_1,\dots,e_n\}$ to obtain finitely many equations  involving $x_{ij}$.  Write $E$ for the set of all such equations;
\end{enumerate}
obtains a procedure to compute the generic element $R$ in $\B^c(L)$. Similarly, to calculate  $B_1^i(L)$, we need to set $\uplambda=1$ and add the extra condition (\ref{isom})  into the above Step (3).
\hbo}\end{rem}

Throughout this section we  focus on the 3-dimensional case and suppose that 
$$R=\begin{pmatrix}
    x_{11}  & x_{12} &x_{13}   \\
 x_{21}  & x_{22} &x_{23}   \\
 x_{31}  & x_{32} &x_{33}   \\
\end{pmatrix}$$
denotes an arbitrary Rota-Baxter operator of weight $\uplambda$ on a 3-dimensional $\upomega$-Lie algebra $L$ over $\C$.  

We write $A$ for the polynomial algebra $\C[x_{ij}\mid 1\leqslant i,j\leqslant 3]$.
The following Propositions \ref{prop3.2}, \ref{prop3.3}, and Corollary \ref{coro3.5} provide a sample to apply 
the above procedure to compute Rota-Baxter operators of $L$ and describe the geometric structure on $\B^c(L)$.

\begin{prop}\label{prop3.2}
The set $\B^c(L_1)$ of all compatible Rota-Baxter operators of weight $0$ on $L_1$ is an affine variety determined by the ideal
$I:=\ideal{x_{31},x_{32},x_{33},x_{11}+x_{22},x_{12}x_{21}+x_{22}^2,x_{12}x_{23}-x_{13}x_{22}}$ in $A$. In particular, if $R\in \B^c(L_1)$, then
\begin{equation}
\label{eq3.1}
R=\begin{pmatrix}
    -a  & b &d   \\
 c  & a &e   \\
0  & 0 &0   \\
\end{pmatrix}
\end{equation}
where $a^2+bc=0$ and $ad=be$.
\end{prop}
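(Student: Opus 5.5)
The plan is to carry out Steps (1), (2), (3'), (4), (5) of the procedure of Remark \ref{rem3.1} by hand for $L_1$. With the basis $e_1=x,e_2=y,e_3=z$ and $R(e_i)=\sum_j x_{ij}e_j$, each equation can be written down explicitly. So we only need to show that the resulting system of polynomial equations cuts out the same zero set as $I$.

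\textbf{Step 1: compatibility.} We impose (\ref{comp}) first, because it is linear and kills several variables at once. Since $\upomega$ is skew, the diagonal pairs $(e_i,e_i)$ give nothing. The only nonzero value is $\upomega(x,y)=1$, so:
\begin{itemize}
\item the pair $(x,y)$ gives $x_{11}+x_{22}=0$;
\item the pair $(x,z)$ gives $\upomega(x,R(z))=x_{32}=0$;
\item the pair $(y,z)$ gives $\upomega(y,R(z))=-x_{31}=0$.
\end{itemize}
Hence $R(z)=x_{33}z$. These three linear generators of $I$ come from compatibility alone.

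\textbf{Step 2: the Rota-Baxter condition.} Next we impose (\ref{rbc}) with $\uplambda=0$. By skew-symmetry it suffices to check the pairs $(x,y)$, $(x,z)$ and $(y,z)$.
\begin{itemize}
\item The pair $(x,z)$ reduces to $x_{33}x_{12}z=x_{12}x_{33}z$, which holds automatically.
\item For the pair $(y,z)$, the left side is $[R(y),x_{33}z]=x_{33}x_{22}z$. The right side is $R(x_{22}z+x_{33}z)=(x_{22}+x_{33})x_{33}z$. This forces $x_{33}^2=0$, hence $x_{33}=0$ on the variety. Since $I$ is only asserted to determine the variety, not to equal the full ideal $E$ generates, we may replace $x_{33}^2$ by $x_{33}$.
\item The pair $(x,y)$ is the substantive case. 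Expand $[R(x),R(y)]$, using $R(x)=-x_{22}x+x_{12}y+x_{13}z$ and $R(y)=x_{21}x+x_{22}y+x_{23}z$, with the brackets $[x,y]=y$, $[y,z]=z$, $[x,z]=0$. Likewise expand $R([R(x),y]+[x,R(y)])$, where $R$ now has zero third row. Comparing the coefficients of $x$, $y$ and $z$ should produce exactly $x_{12}x_{21}+x_{22}^2=0$ and $x_{12}x_{23}-x_{13}x_{22}=0$, possibly up to scalar multiples or combinations with the linear relations already found.
\end{itemize}

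\textbf{Main obstacle.} The only real bookkeeping is the expansion for the pair $(x,y)$. There I expect the $y$- and $z$-coefficients to give the two quadratic generators. The $x$-coefficient should vanish identically, because no bracket has an $x$-component.

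\textbf{Conclusion.} The steps above show that the zero set of $E$ together with (\ref{comp}) equals $V(I)$. Renaming $a=x_{22}$, $b=x_{12}$, $c=x_{21}$, $d=x_{13}$, $e=x_{23}$ then gives the matrix form (\ref{eq3.1}), with $a^2+bc=0$ and $ad=be$. Conversely, substituting (\ref{eq3.1}) back into all pairs shows that every such matrix lies in $\B^c(L_1)$.
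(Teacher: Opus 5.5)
Your proposal is correct and follows essentially the paper's route: you write down the three compatibility equations and the nine coefficient equations of the Rota--Baxter identity on basis pairs, combine them, and pass from $x_{33}^2=0$ to $x_{33}=0$ at the level of zero sets, exactly as the paper does (imposing compatibility first just shortens the bookkeeping). One correction for the pair $(x,y)$: the $x$-coefficient does not vanish ``because no bracket has an $x$-component'', since $R(y)$ has $x$-component $x_{21}$; it vanishes because $[R(x),y]+[x,R(y)]=(x_{11}+x_{22})y-x_{13}z$ is killed by $R$ once $x_{11}+x_{22}=0$ and $R(z)=0$, so the whole right-hand side is zero and the two quadratic generators of $I$ come from $[R(x),R(y)]=(x_{11}x_{22}-x_{12}x_{21})y+(x_{12}x_{23}-x_{13}x_{22})z=0$.
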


\begin{proof}
Note that the nonzero generating relations in $L_1$ are given by
$$[x,y]=y,[y,z]=z,\upomega(x,y)=1.$$
The Rota-Baxter condition (\ref{rbc}) with $\uplambda=0$ and $(x,y)\in V(L_1)$ obtains the following 9 equations:
\begin{equation} \label{eq3.2}
\begin{aligned}
x_{23}x_{31} - x_{23}x_{32} - x_{33}^2 &= 0 \quad & x_{21}x_{32} - x_{22}x_{32} - x_{32}x_{33} &= 0\\
x_{21}x_{31} - x_{22}x_{31} - x_{31}x_{33} &= 0 \quad &x_{13}x_{32} + x_{23}x_{32} &= 0\\
x_{12}x_{31} + x_{21}x_{32} &= 0\quad &x_{12}x_{21} - x_{13}x_{32} + x_{22}^2 &= 0\\\
x_{11}x_{32} - x_{12}x_{31} - x_{12}x_{32} - x_{22}x_{32} &= 0 \quad &x_{11}x_{21} - x_{13}x_{31} + x_{21}x_{22} &= 0\\
x_{11}x_{23} - x_{12}x_{23} + x_{13}x_{22} - x_{13}x_{33} + x_{22}x_{23} &= 0.
\end{aligned}
\end{equation}
The compatibility condition (\ref{comp}) when $(x,y)$ runs over $V(L_1)$  can be read as
\begin{equation}
\label{eq3.3}
x_{31}=x_{32}=0\textrm{ and } x_{11} + x_{22}=0.
\end{equation}
Putting these equations in (\ref{eq3.2}) and (\ref{eq3.3})  together, we see that
\begin{equation*} 
\begin{aligned}
x_{31}=x_{32}=x_{33}&=0\quad\quad  &x_{11} + x_{22}&=0\\
x_{12}x_{21}  + x_{22}^2&=0 \quad\quad  &x_{12}x_{23} - x_{13}x_{22} &=0.
\end{aligned}
\end{equation*}
This implies that $\B^c(L_1)=V(I)$. Namely, $\B^c(L_1)$ can be determined by the ideal $I$. 
As a direct consequence, we obtain the generic matrix form of $R$.  
\end{proof}

\begin{prop}\label{prop3.3}
Let $\p_1:=\ideal{x_{11},x_{12},x_{22},x_{31},x_{32},x_{33}}$ and $$\p_2:=\ideal{x_{31},x_{32},x_{33}, 
x_{11}+x_{22},x_{12}x_{21}+x_{22}^2,x_{12}x_{23}-x_{13}x_{22},  x_{13}x_{21} + x_{22}x_{23}}$$
be two ideals of $A$. Then 
\begin{enumerate}
  \item $\p_1$ and $\p_2$ both are prime ideals. 
  \item $\B^c(L_1)=V(\p_1)\cup V(\p_2)$. 
\end{enumerate}
\end{prop}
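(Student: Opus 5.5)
Modulo the linear generators $x_{31},x_{32},x_{33}$ and $x_{11}+x_{22}$, the problem takes place in the polynomial ring $B=\C[x_{12},x_{13},x_{21},x_{22},x_{23}]$, with $x_{11}$ eliminated via $x_{11}=-x_{22}$. Under this identification, $\p_1$ becomes $\ideal{x_{12},x_{22}}$ in $B$, and we have $A/\p_1\cong\C[x_{13},x_{21},x_{23}]$. This is a domain, so $\p_1$ is prime.

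For $\p_2$, the image is the ideal $J=\ideal{x_{12}x_{21}+x_{22}^2,\ x_{12}x_{23}-x_{13}x_{22},\ x_{13}x_{21}+x_{22}x_{23}}$. These are exactly the $2\times 2$ minors of the matrix
$$M=\begin{pmatrix} x_{12} & x_{22} & x_{13}\\ -x_{22} & x_{21} & x_{23}\end{pmatrix},$$
up to sign. To check: $x_{12}x_{21}+x_{22}^2$ comes from columns 1,2; $x_{12}x_{23}+x_{13}x_{22}$ from columns 1,3 has the wrong sign, so I will adjust the sign conventions of the rows and columns (for instance, replace $x_{22}$ by $-x_{22}$ in the second row) until all three generators are the maximal minors of a generic $2\times 3$ matrix in five variables with one repeated entry.

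If an honest generic-determinantal model does not quite fit, I will argue directly instead. Localize at $x_{12}$. There $x_{21}=-x_{22}^2/x_{12}$ and $x_{23}=x_{13}x_{22}/x_{12}$, and the third generator follows. Hence $B_{x_{12}}/J\cong\C[x_{12}^{\pm1},x_{13},x_{22}]$, which is a domain of dimension $3$. Primality of $J$ then reduces to showing that $x_{12}$ is a nonzerodivisor modulo $J$. I will check this with a Gröbner basis of $J$ in a suitable order, which is a routine computation: I expect the three minors to already form a Gröbner basis, and then $J:x_{12}=J$ is read off from the saturation. This step is the main obstacle; everything else is formal.

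For part (2), first compare ideals. We have $I\subseteq\p_2$. Also $I\subseteq\p_1$, because each generator of $I$ lies in $\p_1$: $x_{12}x_{21}+x_{22}^2$ and $x_{12}x_{23}-x_{13}x_{22}$ both involve $x_{12}$ or $x_{22}$ in every term. Hence $V(\p_1)\cup V(\p_2)\subseteq V(I)=\B^c(L_1)$ by Proposition~\ref{prop3.2}.

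For the reverse inclusion, take a point of $V(I)$. If $x_{12}\neq0$, then $x_{21}=-x_{22}^2/x_{12}$ and $x_{23}=x_{13}x_{22}/x_{12}$. Substituting gives
$$x_{13}x_{21}+x_{22}x_{23}=\frac{-x_{13}x_{22}^2+x_{13}x_{22}^2}{x_{12}}=0,$$
so the point lies in $V(\p_2)$. If $x_{12}=0$, then $x_{22}^2=0$ forces $x_{22}=0$, and $x_{11}=0$ follows, so the point lies in $V(\p_1)$.

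Finally, I will note that neither variety contains the other. Both are $3$-dimensional, consistent with Corollary~\ref{coro3.5}, so this is a genuine decomposition into irreducible components.
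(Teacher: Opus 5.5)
Your proposal is correct. Your working route for (1), the fallback, follows the paper's outline: invert $x_{12}$ to get $\C[x_{12}^{\pm1},x_{13},x_{22}]$, then show $x_{12}$ is a nonzerodivisor modulo $J=\langle \ell_1,\ell_2,\ell_3\rangle$. The difference is how that key step is certified. Your way is lighter, but only once you fix the order; ``a suitable order'' is doing real work and should be made explicit.

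\textbf{Your route, made precise.} Take grevlex with $x_{13}>x_{21}>x_{22}>x_{23}>x_{12}$.
\begin{enumerate}
\item The leading monomials of $\ell_1,\ell_2,\ell_3$ are $x_{22}^2$, $x_{13}x_{22}$, $x_{13}x_{21}$.
\item The two non-coprime S-pairs give $\spol(\ell_1,-\ell_2)=x_{12}\ell_3$ and $\spol(-\ell_2,\ell_3)=-x_{23}\ell_1$, so the three quadrics form a Gr\"obner basis.
\item Hence $\mathrm{in}(J)=\langle x_{22}^2,x_{13}x_{22},x_{13}x_{21}\rangle$, which does not involve $x_{12}$.
\item If $x_{12}f\in J$ with $f$ a nonzero normal form, then $x_{12}\LM(f)\in\mathrm{in}(J)$ forces $\LM(f)\in\mathrm{in}(J)$, a contradiction. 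So $J:x_{12}=J$.
\end{enumerate}

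\textbf{The paper's route.} It uses grevlex with $x_{12}$ largest. There the same three quadrics are still a Gr\"obner basis, but $\LM(\ell_1)=x_{12}x_{21}$, so nothing can be read off directly. That is why the paper computes $J\cap\langle x_{12}\rangle$ by elimination with the auxiliary variable $z$, which requires the seven-element basis $\D$ of Lemma 3.4.

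\textbf{Part (2).} Your pointwise case split ($x_{12}\neq0$ versus $x_{12}=0$) replaces the paper's check that $\p_1\p_2\subseteq I$. Over $\C$ the two are equivalent, and yours avoids multiplying out generators.

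\textbf{The determinantal detour.} Your matrix $M$ does not work: its minors are $\ell_1$, $x_{12}x_{23}+x_{13}x_{22}$ and $x_{22}x_{23}-x_{13}x_{21}$. Your proposed fix (negating $x_{22}$ only in the second row) breaks $\ell_1$. The matrix that works is $\begin{pmatrix} x_{12} & -x_{22} & x_{13}\\ x_{22} & x_{21} & x_{23}\end{pmatrix}$, whose minors are $\ell_1,\ell_2,-\ell_3$. Even this is not generic, since $x_{22}$ occurs twice. It is a hyperplane section of the generic $2\times 3$ determinantal variety, and primality does not pass to hyperplane sections automatically. So you were right not to rest the argument on it.
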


\begin{proof}
(1) It suffices to show that the quotient rings $A/\p_1$ and $A/\p_2$ are integral domains. Since
$$A/\p_1=\C[x_{ij}\mid 1\leqslant i,j\leqslant 3]/\ideal{x_{11},x_{12},x_{22},x_{31},x_{32},x_{33}}\cong \C[x_{13},x_{21},x_{23}]$$
is a polynomial algebra, we see that $A/\p_1$ is an integral domain. Moreover, 
\begin{equation}
\label{eq3.4}
A/\p_2\cong\C[x_{12},x_{13},x_{21},x_{22},x_{23}]/\ideal{x_{12}x_{21}+x_{22}^2,x_{12}x_{23}-x_{13}x_{22},  x_{13}x_{21} + x_{22}x_{23}}
\end{equation}
and thus
$$(A/\p_2)\left[\frac{1}{x_{12}}\right]\cong \C[x_{12},x_{13},x_{22}]\left[\frac{1}{x_{12}}\right]$$
is an integral domain. To see that $A/\p_2$ is an integral domain, it suffices to show the \textit{claim}  that the class of $x_{12}$ in $A/\p_2$  is not a zerodivisor. This is, because, if the claim holds, then the quotient ring $A/\p_2$ can be embedded into $(A/\p_2)\left[\frac{1}{x_{12}}\right]$ and we have just seen that the latter is an integral domain. Thus $A/\p_2$ is also an integral domain. 

Let us focus on the proof of the claim. Define 
$$\ell_1:=x_{12}x_{21}+x_{22}^2,~~\ell_2:=x_{12}x_{23}-x_{13}x_{22},~~ \ell_3:=x_{13}x_{21} + x_{22}x_{23}$$
and write $B$ for the polynomial ring $\C[x_{12},x_{13},x_{21},x_{22},x_{23}]$.
By (\ref{eq3.4}), we see that the claim is equivalent to the statement that the colon ideal
$$\ideal{\ell_1,\ell_2,\ell_3}: x_{12}=\ideal{\ell_1,\ell_2,\ell_3}$$
where the two ideals both are in $B$. 
It follows from \cite[Section 1.2.4]{DK15} that
$$\ideal{\ell_1,\ell_2,\ell_3}: x_{12}=\frac{1}{x_{12}}\cdot (\ideal{\ell_1,\ell_2,\ell_3}\cap \ideal{x_{12}}).$$
Now we may use the standard method of finding Gr\"obner bases of the intersection of two ideals in a polynomial ring to complete the proof of the claim.  Introduce a new variable $z$ over $B$ and consider the ideal 
$J:=\ideal{\ell_1,\ell_2,\ell_3}\cdot z+\ideal{x_{12}}\cdot (1-z)$ in $B[z]$. Taking the grevlex ordering on $B[z]$ with
$z>x_{12}>x_{13}>x_{21}>x_{22}>x_{23}$, and using the standard Buchberger’s algorithm in Lemma \ref{lem3.4} below, we derive that
\begin{equation}\label{eq3.5}
\D:=\{x_{12}\cdot\ell_1, z\cdot \ell_3, x_{12}\cdot\ell_3, z\cdot x_{13}x_{22} - x_{12}x_{23}, x_{12}\cdot\ell_2, 
z\cdot x_{22}^2 + x_{12}x_{21}, z\cdot x_{12} - x_{12}\}
\end{equation}
is a Gr\"obner basis for $J$. By \cite[Algorithm 1.2.1]{DK15},  removing all polynomials in $\D$ that involve the additional variable $z$ will obtain a Gr\"obner basis for $\ideal{\ell_1,\ell_2,\ell_3}\cap \ideal{x_{12}}$. Thus
$$\{x_{12}\cdot\ell_1,x_{12}\cdot\ell_2,x_{12}\cdot\ell_3\}$$
is a Gr\"obner basis for $\ideal{\ell_1,\ell_2,\ell_3}\cap \ideal{x_{12}}$, and $\{\ell_1,\ell_2,\ell_3\}$
is a Gr\"obner basis for $\ideal{\ell_1,\ell_2,\ell_3}: x_{12}$. Therefore, $\ideal{\ell_1,\ell_2,\ell_3}: x_{12}=\ideal{\ell_1,\ell_2,\ell_3}$
and the claim holds. 

(2) Note that in Proposition \ref{prop3.2}, we have seen that $\B^c(L_1)=V(I)$. Thus it suffices to show that
$V(I)=V(\p_1)\cup V(\p_2)$. In fact, since the class of each generator of $I$ in $A/\p_1$ is zero, $I$ is contained in $\p_1$. Thus
$V(\p_1)\subseteq V(I)$. Clearly, $I\subseteq \p_2$ and so $V(\p_2)\subseteq V(I)$. Hence, 
$V(\p_1)\cup V(\p_2)\subseteq V(I)$. Conversely, by \cite[Theorem 7, page 192]{CLO15}, we see that
$V(\p_1)\cup V(\p_2)=V(\p_1\cdot\p_2)$. Thus, it suffices to show that $\p_1\cdot\p_2\subseteq I$. Note that
\cite[Proposition 6, page 191]{CLO15} implies that $\p_1\cdot\p_2$ can be generated by the products of these generators 
for $\p_1$ and $\p_2$. As the first six generators of $\p_2$ and the last three generators of $\p_1$ belong to $I$, we only need to verify that the classes of the following elements 
$$x_{11}\cdot\ell_3,x_{12}\cdot\ell_3,x_{22}\cdot\ell_3$$
in $A/I$ are zero. Working over modulo $I$, we observe that 
\begin{eqnarray*}
x_{11}\cdot\ell_3&=&x_{11}(x_{13}x_{21} + x_{22}x_{23})=(-x_{22})(x_{13}x_{21} + x_{22}x_{23})\\
&=&-x_{13}x_{21}x_{22}-x_{22}^2x_{23}=-x_{21}x_{12}x_{23}-x_{22}^2x_{23}\\
&=&(-x_{23})(x_{12}x_{21}+x_{22}^2)\equiv0.
\end{eqnarray*}
Similarly, $x_{12}\cdot\ell_3\equiv 0$ and $x_{22}\cdot\ell_3\equiv 0$ as well.
This shows that $\B^c(L_1)=V(I)=V(\p_1)\cup V(\p_2)$. 
\end{proof}

\begin{lem}\label{lem3.4}
The set $\D$ defined in $(\ref{eq3.5})$ is a Gr\"obner basis for $J=\ideal{\ell_1,\ell_2,\ell_3}\cdot z+\ideal{x_{12}}\cdot (1-z)$ in $B[z]$, with  the grevlex ordering on $B[z]$ with $z>x_{12}>x_{13}>x_{21}>x_{22}>x_{23}$.
\end{lem}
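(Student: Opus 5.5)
The plan is to verify directly the two defining properties of a Gr\"obner basis: that $\D$ generates $J$, and that every $S$-polynomial of a pair of elements of $\D$ reduces to zero modulo $\D$. The second part is Buchberger's criterion, for which we use the grevlex ordering fixed in the statement.

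\emph{Step 1: $\ideal{\D}=J$.} The ideal $J$ is generated by $z\ell_1,z\ell_2,z\ell_3$ and $x_{12}(1-z)$. Each element of $\D$ lies in $J$ by the following explicit expressions:
\begin{align*}
x_{12}\ell_i&=x_{12}\cdot z\ell_i+\ell_i\cdot x_{12}(1-z),\qquad i=1,2,3,\\
zx_{13}x_{22}-x_{12}x_{23}&=-z\ell_2-x_{23}\cdot x_{12}(1-z),\\
zx_{22}^2+x_{12}x_{21}&=z\ell_1+x_{21}\cdot x_{12}(1-z),\\
zx_{12}-x_{12}&=-x_{12}(1-z).
\end{align*}
Conversely, every generator of $J$ lies in $\ideal{\D}$. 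Indeed $z\ell_3\in\D$, and $x_{12}(1-z)=-(zx_{12}-x_{12})$. The remaining two generators come from the identities
\begin{align*}
z\ell_1&=(zx_{22}^2+x_{12}x_{21})+x_{21}(zx_{12}-x_{12}),\\
z\ell_2&=-(zx_{13}x_{22}-x_{12}x_{23})+x_{23}(zx_{12}-x_{12}).
\end{align*}
Hence $\ideal{\D}=J$.

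\emph{Step 2: leading monomials.} Next we record the leading monomials of the seven elements of $\D$ under grevlex with $z>x_{12}>x_{13}>x_{21}>x_{22}>x_{23}$. For the elements involving $z$ we expect $z x_{13}x_{21}$, $zx_{13}x_{22}$, $zx_{22}^2$ and $zx_{12}$. For the $z$-free cubics $x_{12}\ell_i$ we compare terms of equal degree via the smallest variable, which should give $x_{12}^2x_{21}$, $x_{12}^2x_{23}$ and $x_{12}x_{13}x_{21}$. This bookkeeping must be done carefully, since grevlex ties among degree-$3$ terms are broken at the last variable.

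\emph{Step 3: Buchberger's criterion.} There are $21$ pairs. First we discard every pair whose leading monomials are coprime, using Buchberger's first criterion; for example, pairs of $zx_{22}^2$ with monomials free of $z$ and $x_{22}$ are dropped. Then we use the chain (lcm) criterion to drop further redundant pairs among the $z$-monomials. For each remaining pair we compute the $S$-polynomial and carry out the division by $\D$.

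Many of these reductions are immediate, because $zx_{12}-x_{12}$ lets us replace $zx_{12}$ by $x_{12}$. For instance, $S(zx_{12}-x_{12},\,z\ell_3)$ reduces to $x_{12}\ell_3-x_{12}\ell_3\cdot z$-type terms, which collapse to a multiple of $x_{12}\ell_3\in\D$. Likewise, the $S$-polynomials among $x_{12}\ell_1,x_{12}\ell_2,x_{12}\ell_3$ are $x_{12}$ times the $S$-polynomials of $\ell_1,\ell_2,\ell_3$. These reduce to zero because $\ell_1,\ell_2,\ell_3$ are the $2\times 2$ minors of the matrix
\[
\begin{pmatrix} x_{12} & x_{22} & -x_{13}\\ -x_{22} & x_{21} & x_{23}\end{pmatrix},
\]
up to sign, and so they satisfy the two linear syzygies given by the rows of that matrix.

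The main obstacle is this last step: the finite but error-prone case-by-case reduction of the surviving $S$-polynomials, especially the mixed pairs pairing one of $z\ell_3$, $zx_{13}x_{22}-x_{12}x_{23}$, $zx_{22}^2+x_{12}x_{21}$ with one of the cubics $x_{12}\ell_i$. I would handle these by first reducing with $zx_{12}-x_{12}$ to eliminate $z$ wherever $x_{12}$ divides, and then reducing with the cubics $x_{12}\ell_i$. A computer algebra check, for example in Magma or Singular, would confirm the result.
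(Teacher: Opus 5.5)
Your route is the paper's. You check $\ideal{\D}=J$: your identities for $z\ell_1$ and $z\ell_2$ are exactly the paper's, and your explicit expressions for $x_{12}\ell_i$ replace its congruences modulo $J$. You then apply Buchberger's criterion, discarding pairs with coprime leading monomials and reducing the rest case by case. Leaving the mixed pairs to a case-by-case or machine check is no less than the paper does, since it only writes out $\spol(f_1,f_2)$.

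There is, however, a concrete error in Step~2 that breaks Step~3 as you set it up. Number the elements of $\D$ as $f_1,\dots,f_7$ in the order of (\ref{eq3.5}), so $f_5=x_{12}\ell_2=x_{12}^2x_{23}-x_{12}x_{13}x_{22}$. Its leading monomial is not $x_{12}^2x_{23}$. Both terms have degree $3$, and grevlex ranks higher the term with the smaller exponent of the last variable $x_{23}$. Hence $\LM(f_5)=x_{12}x_{13}x_{22}$, with leading coefficient $-1$, as in the paper's list.

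Correspondingly $\LM(\ell_2)=x_{13}x_{22}$. Your determinantal syzygies then give $\spol(\ell_1,\ell_3)=-x_{22}\ell_2$ and $\spol(\ell_2,\ell_3)=x_{23}\ell_1$, while $\LM(\ell_1)$ and $\LM(\ell_2)$ are coprime, so your treatment of the $z$-free pairs survives.

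With your monomial, though, the coprime criterion would wrongly discard $(f_2,f_5)$, $(f_4,f_5)$ and $(f_5,f_6)$. Their leading monomials in fact share $x_{13}$, $x_{13}x_{22}$ and $x_{22}$ respectively. A Buchberger check run with a wrong leading term is not a valid verification, so these three pairs must actually be computed. With the convention (\ref{eq3.6}) one gets
\[
\spol(f_2,f_5)=-zx_{23}f_1,\qquad \spol(f_4,f_5)=-x_{12}x_{23}f_7,\qquad \spol(f_5,f_6)=x_{12}f_3+x_{12}x_{22}x_{23}f_7 .
\]
These are standard representations, so all three reduce to zero. With the corrected leading monomials, all $21$ pairs do reduce to zero and the lemma holds. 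Your argument is therefore sound once this leading term is fixed and these pairs are added back.
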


\begin{proof}
We write $f_1,\dots,f_7$ for the elements of $\D$ in the order appeared in (\ref{eq3.5}) respectively,  i.e., $f_1=x_{12}\cdot\ell_1, f_2=z\cdot\ell_3, \dots, f_7=zx_{12}-x_{12}$. Note that $J$ is generated by 
$\{z\cdot \ell_1, z\cdot \ell_2, f_2, f_7\}$, and a direct verification shows that
\begin{eqnarray*}
z\cdot \ell_1&=&(zx_{22}^2 + x_{12}x_{21})+x_{21}(zx_{12} - x_{12})=f_6+x_{21}\cdot f_7\\
z\cdot \ell_2&=&-(zx_{13}x_{22} - x_{12}x_{23})+x_{23}(zx_{12} - x_{12})=-f_4+x_{23}\cdot f_7.
\end{eqnarray*}
Thus the ideal $J$ can be generated by $\{f_2,f_4,f_6,f_7\}$, which is contained in the ideal $J'$ generated by  $\D$.
Conversely, modulo $J$, we see that $f_1=x_{12}\cdot \ell_1=(zx_{12}-f_7)\cdot \ell_1\equiv x_{12}\cdot z\cdot \ell_1\equiv 0$, and similarly, $f_3\equiv 0$ and  $f_5\equiv 0$. This means that $f_1,f_3,f_5$ are all in $J$. Hence, 
$J=J'$ can be generated by $\D$.

To show that $\D$ is a Gr\"obner basis for $J$, it suffices to show that the $s$-polynomial of any two elements in $\D$
reduces to zero with respect to $\D$; see for example, \cite[Lemma 1.1.8]{DK15} or \cite[Theorem 3, page 105]{CLO15}.
Recall that the $s$-polynomial of any two polynomials $f$ and $g$ is defined as:
\begin{equation}
\label{eq3.6}
\spol(f,g):=\frac{\LC(g)\cdot t}{\LM(f)}\cdot f- \frac{\LC(f)\cdot t}{\LM(g)}\cdot g
\end{equation}
where $t:=\lcm(\LM(f),\LM(g))$ denotes the least common multiple. Note that
\begin{equation*} 
\begin{aligned}
\LM(f_1)&=x_{12}^2x_{21}, \quad & \LM(f_2) &=zx_{13}x_{21}, \quad  &  \LM(f_3) &=x_{12}x_{13}x_{21}, \quad 
&  \LM(f_4) &=zx_{13}x_{22},\\
\LM(f_5)&=x_{12}x_{13}x_{22}, \quad & \LM(f_6)&=zx_{22}^2, \quad & \LM(f_7)&=zx_{12}.
\end{aligned}
\end{equation*}
By  \cite[Proposition 4, page 106]{CLO15}, if $\LM(f_i)$ and $\LM(f_j)$ are coprime, then 
$\spol(f_i,f_j)$ reduces to zero with respect to $\D$. For instance, $\spol(f_1,f_6)$ and $\spol(f_3,f_6)$ both reduce to zero. 
If $\LM(f_i)$ and $\LM(f_j)$ are not coprime,  we may use \cite[Algorithm 1.1.6]{DK15} to show that 
$\spol(f_i,f_j)$ reduces to zero for $i\neq j\in\{1,2,\dots,7\}$. Let's take $(i,j)=(1,2)$ as an example to illustrate this method.  In fact, it follows from (\ref{eq3.6}) that
$$\spol(f_1,f_2)=zx_{12}x_{13}x_{22}^2 - zx_{12}^2x_{22}x_{23}$$
and $\spol(f_1,f_2)=(x_{12}x_{22})\cdot f_4-(x_{12}x_{22}x_{23})\cdot f_7.$ Thus $\spol(f_1,f_2)$
reduces to $0$ with respect to $\D$. A similar and case-by-case argument can be applied to the other cases.
Hence, $\spol(f_i,f_j)$ reduces to zero for all $i$ and $j$. 
\end{proof}

Apparently, Proposition \ref{prop3.3} leads to the following immediate consequence  that describes the geometric structure of the variety $\B^c(L_1)$.

\begin{coro}\label{coro3.5}
The set $\B^c(L_1)$ is a $3$-dimensional affine variety with two $3$-dimensional irreducible components $V(\p_1)$ and $V(\p_2)$.
\end{coro}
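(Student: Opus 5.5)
By Proposition \ref{prop3.3}, $\B^c(L_1)=V(\p_1)\cup V(\p_2)$ with $\p_1,\p_2$ prime. The plan is to compute $\dim V(\p_i)=\dim A/\p_i$ (Krull dimension) for $i=1,2$, and then check that neither component contains the other. Once that is done, $V(\p_1)$ and $V(\p_2)$ are exactly the irreducible components, and the dimension of $\B^c(L_1)$ is the maximum of their dimensions.

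For $\p_1$ the proof of Proposition \ref{prop3.3} already gives $A/\p_1\cong\C[x_{13},x_{21},x_{23}]$, so $\dim V(\p_1)=3$. For $\p_2$ I use the localization from the same proof. Since $A/\p_2$ is a domain and $x_{12}\notin\p_2$, the localization $(A/\p_2)[1/x_{12}]$ has the same fraction field, hence the same transcendence degree over $\C$. That localization is $\C[x_{12},x_{13},x_{22}][1/x_{12}]$: inverting $x_{12}$, the relations $\ell_1,\ell_2$ solve
\[
x_{21}=-\frac{x_{22}^2}{x_{12}},\qquad x_{23}=\frac{x_{13}x_{22}}{x_{12}},
\]
and then $\ell_3$ vanishes automatically. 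So the transcendence degree is $3$, and $\dim V(\p_2)=3$. Geometrically, $V(\p_2)$ is the closure of a $3$-parameter family of matrices parametrized by $x_{12}\neq 0$, $x_{13}$, $x_{22}$.

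The remaining step is non-containment. Since both components are irreducible of the same dimension $3$, it suffices to show $\p_1\neq\p_2$, and I will exhibit explicit points. The point $x_{13}=x_{21}=1$ with all other coordinates zero lies in $V(\p_1)$ but not in $V(\p_2)$, because there $\ell_3=x_{13}x_{21}+x_{22}x_{23}=1\neq 0$. Conversely, $x_{12}=1$ with all other coordinates zero lies in $V(\p_2)$ but not in $V(\p_1)$. Hence neither component contains the other, and $\B^c(L_1)$ is $3$-dimensional with exactly these two irreducible components.

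The only step needing care is the dimension of $V(\p_2)$. The localization argument relies on $x_{12}$ being a nonzerodivisor, which was established in Proposition \ref{prop3.3}, so I expect no real obstacle.
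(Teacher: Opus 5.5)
Your proposal is correct and is essentially the paper's argument. The paper simply calls the corollary an immediate consequence of Proposition~\ref{prop3.3}, and you supply the details it leaves implicit: $\dim A/\p_1=3$, $\dim A/\p_2=3$ via the localization at $x_{12}$ from that same proof, non-containment via explicit points, and $x_{12}\notin\p_2$ (witnessed by your point with $x_{12}=1$), which guarantees that the localization has the same fraction field as $A/\p_2$.
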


\begin{rem}{\rm
Note that the containment $\B^c(L)\subseteq \B(L)$ might be proper. Namely, there might be non-compatible Rota-Baxter operators of weight zero on $\upomega$-Lie algebras.  For example, a direct computation verifies that
$$R=\begin{pmatrix}
    a  & a &b   \\
 0  & 0 &c   \\
0  & 0 &0   \\
\end{pmatrix}\in \B(L_1)$$
for all $a,b,c\in\C$. Clearly, when $a\neq 0$, this $R$ doesn't belong to $\B^c(L_1)$ because it doesn't have the general matrix form appeared in Proposition \ref{prop3.2}.
\hbo}\end{rem}

Table \ref{tab1} below summarizes our computations on $\B^c(L)$ and its geometric structure for a $3$-dimensional non-Lie complex $\upomega$-Lie algebra $L$, where $\dim(\B^c(L))$ denotes the Krull dimension of $\B^c(L)$ and 
``$\#$ Irred.'' denotes the number of irreducible components of $\B^c(L)$. 

\begin{center}
\renewcommand{\arraystretch}{1}
\begin{longtable}{c|c|c|c|c|c}
$L$  &  $\dim(\B^c(L))$ &  $\#$ Irred. & $V(\p_1)$ & $V(\p_2)$ & $V(\p_3)$ \\ \hline
$L_1$ & 3 & 2 &  $\begin{pmatrix}
    0  & 0 &a   \\
 b  & 0 &c   \\
0  & 0 &0   \\
\end{pmatrix}$ & $\begin{pmatrix}
    -a  & b &d   \\
 c  & a &e   \\
0  & 0 &0   \\
\end{pmatrix}$, \parbox[c]{2cm}{\raggedright $a^2+bc=0$\\ $ad=be$\\ $cd+ae=0$} \\ \hline
$L_2$ & 2 & 3 & $\begin{pmatrix}
    0  & 0 &0   \\
 0  & 0 &0   \\
a  & b &0   \\
\end{pmatrix}$ & $\begin{pmatrix}
    0  & a &0   \\
 0  & b &0   \\
0  & 0 &0   \\
\end{pmatrix}$ & $\begin{pmatrix}
    0  & a &b   \\
 0  & 0 &0   \\
0  & 0 &0   \\
\end{pmatrix}$\\ \hline 
$B$ &2&2& $\begin{pmatrix}
    0  & 0 &0   \\
 0  & 0 &0   \\
a  & b &0   \\
\end{pmatrix}$ & $\begin{pmatrix}
    a  & 0 &0   \\
 0  & 0 &0   \\
0  & b &0   \\
\end{pmatrix}$\\ \hline
$A_{\upalpha}$& 3 & 2&$\begin{pmatrix}
    0  & 0 &0   \\
 0  & 0 &0   \\
a  & b &0   \\
\end{pmatrix}$ & $\begin{pmatrix}
    0  & 0 &0   \\
 b & 0 &0   \\
a  & b &0   \\
\end{pmatrix}$\\ \hline
$C_{\upalpha}$&2&3&$ \begin{pmatrix}
    0  & 0 &0   \\
 0& 0 &0   \\
a  & b &0   \\
\end{pmatrix}$ & $\begin{pmatrix}
    0  & 0 &0   \\
 a & 0 &b   \\
0  & 0 &0   \\
\end{pmatrix}$ & $\begin{pmatrix}
    a  & 0 &0   \\
 0 & 0 &0   \\
0  & 0 &0   \\
\end{pmatrix}$ \\
\caption{$\B^c(L)$ in Dimension 3} \label{tab1}
\end{longtable}
\end{center}
\vspace{-0.8cm}

We may derive the following corollary directly from Table \ref{tab1}.

\begin{coro}
Let $L$ be a $3$-dimensional non-Lie $\upomega$-Lie algebra over $\C$. Then
the variety $\B^c(L)$ of compatible Rota-Baxter operators of weight $0$ is not irreducible. Moreover, $\B^c(L)$
has Krull dimension either $2$ or $3$, with at most $3$ irreducible components. 
\end{coro}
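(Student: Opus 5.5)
The plan is to read the statement off Table \ref{tab1}, which rests on the classification \cite[Theorem 2]{CLZ14}. Every $3$-dimensional non-Lie complex $\upomega$-Lie algebra is isomorphic to one of $L_1$, $L_2$, $B$, $A_{\upalpha}$, $C_{\upalpha}$. So I first note that the isomorphism type of $\B^c(L)$ is an invariant of $L$. An isomorphism $\phi:L\ra L'$ of $\upomega$-Lie algebras satisfies $\upomega'(\phi x,\phi y)=\upomega(x,y)$. Hence conjugation $R\mapsto \phi R\phi^{-1}$ preserves (\ref{rbc}) with $\uplambda=0$ and (\ref{comp}). This conjugation is a linear automorphism of the space of $3\times 3$ matrices, so it carries $\B^c(L)$ isomorphically onto $\B^c(L')$. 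Therefore the Krull dimension and the number of irreducible components only need to be checked for the five representatives.

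For $L_1$, everything is already established. Proposition \ref{prop3.3} and Corollary \ref{coro3.5} give two distinct $3$-dimensional components, $V(\p_1)$ and $V(\p_2)$, so $\B^c(L_1)$ is reducible of dimension $3$. For the remaining four cases I would repeat the procedure from the start of this section, with Step (3) replaced by Step (3') as in Remark \ref{rem3.1}. Concretely, I would:
\begin{enumerate}
\item write down the $9$ Rota-Baxter equations and the linear compatibility equations;
\item use the linear equations to eliminate variables;
\item decompose the resulting ideal.
\end{enumerate}

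For $L_2$, $B$, and $C_{\upalpha}$, every component listed in Table \ref{tab1} is a linear subspace. Primality of each listed ideal is then immediate, since each quotient is a polynomial ring. For $A_{\upalpha}$, the second component is parametrized linearly by $(a,b)$, so it is again a linear space. The dimensions can be read off directly: $3$ for $L_1$ and $A_{\upalpha}$, and $2$ for $L_2$, $B$, $C_{\upalpha}$.

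To confirm that the union of the listed components equals $\B^c(L)$, I would argue as in Proposition \ref{prop3.3}(2). Containment of each component in $\B^c(L)$ is a direct check. For the reverse containment, I would show that the product of the component ideals lies in the defining ideal $I$, or more simply split the defining equations case by case on whether a key variable vanishes. To see that the components are distinct and irredundant, I would exhibit for each component a point that lies in it but not in the others. For example, for $B$, the matrix with $a\neq 0$ in the second component is not strictly lower triangular, so it is not in the first.

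This gives, in every case, at least two components and at most three, and Krull dimension $2$ or $3$, which is the statement. I expect the main obstacle to be the families $A_{\upalpha}$ and $C_{\upalpha}$. There the structure constants depend on $\upalpha$, and the decomposition must be shown to be uniform in $\upalpha$. Special values of $\upalpha$ might make extra solutions appear. I would first carry out the Gröbner basis computation over $\C(\upalpha)$ and then check the finitely many values where leading coefficients vanish. Those values are either excluded by the classification or shown by direct substitution to give the same components.
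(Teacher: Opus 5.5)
Your route is the paper's: the corollary is read off Table~\ref{tab1} case by case via the classification in \cite{CLZ14}. Your conjugation argument $R\mapsto \phi R\phi^{-1}$ correctly supplies the isomorphism invariance that the paper uses tacitly. The final conclusion is right. However, two of the facts you read off along the way are false, and you should correct them rather than inherit them from the table.

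\emph{First: $A_{\upalpha}$.} $\B^c(A_{\upalpha})$ has dimension $2$, not $3$. Compatibility gives $x_{12}=x_{13}=0$ and $x_{33}=-x_{22}$. The Rota-Baxter equations for $(x,y)$, $(x,z)$, $(y,z)$ then force $x_{11}=x_{22}=x_{23}=0$ and $x_{21}(x_{32}-x_{21})=0$, with $x_{31}$ free. So $\B^c(A_{\upalpha})$ is a union of two planes, which are exactly the two $2$-parameter linear families you describe yourself. The ``$3$'' in the table is a misprint, and your own parametrization should have flagged it.

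\emph{Second: $C_{\upalpha}$.} This row is not uniform in $\upalpha$. Your prediction that exceptional values are ``excluded by the classification or give the same components'' fails at $\upalpha=1$. The algebra $C_1$ ($[x,y]=y$, $[x,z]=z$, $[y,z]=x$, $\upomega(y,z)=2$) cannot be excluded: $\mathrm{ad}$ of a generator of $\ker(\upomega)$ is diagonalizable with eigenvalues proportional to $0,1,1$, which happens for no other algebra on the list.

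For $\upalpha\neq 0,\pm1$, a factor $\upalpha^2-1$ in the equations forces $x_{22}=0$ and yields the three tabulated components. For $\upalpha=1$, the system instead reduces to $x_{12}=x_{13}=0$, $x_{33}=-x_{22}$, $x_{22}^2+x_{23}x_{32}=0$, and
\[
\begin{pmatrix} x_{11}+x_{22} & x_{23}\\ x_{32} & x_{11}-x_{22}\end{pmatrix}\begin{pmatrix} x_{21}\\ x_{31}\end{pmatrix}=0 .
\]
On the quadric this matrix has determinant $x_{11}^2$. Hence $\B^c(C_1)$ is the union of two $3$-dimensional irreducible components, obtained by adding to these equations either $x_{21}=x_{31}=0$ or $x_{11}=0$. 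For example, $\begin{pmatrix}1&0&0\\ 0&1&1\\ 0&-1&-1\end{pmatrix}\in\B^c(C_1)$ lies in none of the three components listed in the table.

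With these corrections the cases are:
\begin{itemize}
\item $L_1$ and $C_1$: dimension $3$, with $2$ components;
\item $L_2$ and $C_{\upalpha}$ ($\upalpha\neq 1$): dimension $2$, with $3$ components;
\item $B$ and $A_{\upalpha}$: dimension $2$, with $2$ components.
\end{itemize}
So the corollary still holds. But it holds only after $\upalpha=1$ is actually treated, and that is precisely the step your plan left as an (incorrect) prediction.
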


Note that Table \ref{tab1} also demonstrates that there are many non-invertible Rota-Baxter operators of weight $0$ on $\upomega$-Lie algebras. The following example shows that invertible Rota-Baxter operators will appear in the case of weight 1. 

\begin{exam}{\rm
Consider the $3$-dimensional complex $\upomega$-Lie algebra $L_1$ and the affine variety $\B^i_1(L_1)$ of all isometric Rota-Baxter operators of weight $1$ on $L_1$. Applying the same method as in computing $\B^c(L_1)$, we see that $\B^i_1(L_1)$ has three irreducible components $V(\p_1), V(\p_2)$, and $V(\p_3)$, which consist of the 
following matrices 
$$\begin{pmatrix}
    -1  & a &b   \\
 0 & -1 &0   \\
0  & 0 &-1   \\
\end{pmatrix}, \quad \begin{pmatrix}
    -1  & -1 &a   \\
 0 & -1 &b   \\
0  & 0 &-1   \\
\end{pmatrix}, \quad \begin{pmatrix}
    -1  & a & -ab-b  \\
 0 & -1 & b  \\
0  & 0 &0   \\
\end{pmatrix}$$
respectively, for all $a,b\in \C$. Clearly, elements in $V(\p_1)$ and $V(\p_2)$ are invertible. 

Hence, $\B^i_1(L_1)$ is  a $2$-dimensional affine variety, because each component $V(\p_j)$ has Krull dimension $2$. 
Furthermore, the vanishing ideals for the three components can be written as
\begin{eqnarray*}
\p_1&=&\ideal{x_{11}+1,x_{21},x_{22}+1,x_{23},x_{31},x_{32},x_{33}+1}\\
\p_2&=&\ideal{x_{11}+1,x_{12}+1,x_{21},x_{22}+1,x_{31},x_{32},x_{33}+1}\\
\p_3&=&\ideal{x_{11}+1,x_{12}x_{23} + x_{13} + x_{23}, x_{21},x_{22}+1,x_{31},x_{32},x_{33}}.
\end{eqnarray*}
It is not difficult to show that $\p_1,\p_2$, and $\p_3$ all are prime ideals. 
\hbo}\end{exam}

In the following Table \ref{tab2}, we summarize the computations on $\B^i_1(L)$ and their geometric structures for all $3$-dimensional non-Lie $\upomega$-Lie algebras over $\C$. 

\begin{center}
\renewcommand{\arraystretch}{1.2}
\begin{longtable}{c|c|c|c|c|c}
$L$  &  $\dim(\B^i_1(L))$ &  $\#$ Irred. & $V(\p_1)$ & $V(\p_2)$ & $V(\p_3)$ \\  \hline
$L_1$ & 3 & 3 &  $\begin{pmatrix}
    -1  & a &b   \\
 0 & -1 &0   \\
0  & 0 &-1   \\
\end{pmatrix}$ & $\begin{pmatrix}
    -1  & -1 &a   \\
 0 & -1 &b   \\
0  & 0 &-1   \\
\end{pmatrix}$ & $\begin{pmatrix}
    -1  & a & -ab-b  \\
 0 & -1 & b  \\
0  & 0 &0   \\
\end{pmatrix}$  \\   \hline 
$L_2$ & 2 & 2 & $\begin{pmatrix}
    -1  & a &b   \\
 0  & -1 &0   \\
0  & 0 &-1   \\
\end{pmatrix}$ &  \parbox[c]{3cm}{\raggedright  $\begin{pmatrix}
    -1  & a &0   \\
 0  & 0 &0   \\
0  & b &-1   \\
\end{pmatrix},$ \\ \quad where $ab=1$} \\ \hline 
$B$ &1&3& $\begin{pmatrix}
    0  & 0 &0   \\
 -1 & -1 &0   \\
a  & 0 &-1  \\
\end{pmatrix}$ & $\begin{pmatrix}
    0  & 0 &0   \\
 1  & -1 &0   \\
a  & 0 &-1   \\
\end{pmatrix}$ & $\begin{pmatrix}
    -1  & 0 &0   \\
 0  & -1 &0   \\
0  & 0 &-1   \\
\end{pmatrix}$\\ \hline
$A_{\upalpha}$& 1 & 2&$\begin{pmatrix}
    -1  & 0 &0   \\
 0  & -1 &0   \\
0  & 0 &-1   \\
\end{pmatrix}$ & $\begin{pmatrix}
    0  & 0 &0   \\
 a & -1 &0   \\
\frac{a^2+a-\upalpha}{2}  & 0 &-1   \\
\end{pmatrix}$\\ \hline
$C_{\upalpha}$&1&2&$ \begin{pmatrix}
    -1  & 0 &0   \\
 0& -1 &0   \\
0  & 0 &-1   \\
\end{pmatrix}$ &  \parbox[c]{3cm}{\raggedright  $\begin{pmatrix}
    0  & 0 &0   \\
 a  & -1 &0   \\
b  & 0&-1   \\
\end{pmatrix},$ \\ where $ab=\frac{1}{\upalpha-1}$}  \\
\caption{$\B^i_1(L)$ in Dimension 3} \label{tab2}
\end{longtable}
\end{center}
\vspace{-0.8cm}

We close this section with two consequences that can be obtained immediately  from Table \ref{tab2}.

\begin{coro}
Let $L$ be a $3$-dimensional non-Lie $\upomega$-Lie algebra over $\C$. Then
the variety $\B^i_1(L)$ of isometric Rota-Baxter operators of weight $1$ is not irreducible. Moreover, $\B^i_1(L)$
has Krull dimension at most $3$, with either $2$ or $3$ irreducible components. 
\end{coro}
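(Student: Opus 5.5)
The corollary is read off directly from Table \ref{tab2}, once the table is accepted as a complete description of $\B^i_1(L)$ for each of the five algebras $L_1,L_2,B,A_{\upalpha},C_{\upalpha}$. By \cite[Theorem 2]{CLZ14}, these five cover every $3$-dimensional non-Lie $\upomega$-Lie algebra over $\C$ up to isomorphism, so it suffices to check them one at a time. This reduction is legitimate because an isomorphism $\phi:L\ra L'$ of $\upomega$-Lie algebras preserves both the bracket and $\upomega$. Conjugation $R\mapsto \phi R\phi^{-1}$ therefore preserves (\ref{rbc}) with $\uplambda=1$ and (\ref{isom}). Since it is a linear automorphism of the space of matrices, it induces an isomorphism of the varieties $\B^i_1(L)\cong\B^i_1(L')$, which preserves Krull dimension and the number of irreducible components.

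For each algebra I will rely on the procedure of Section \ref{sec3}, modified as in Remark \ref{rem3.1}. The steps are as follows.
\begin{enumerate}
  \item Impose (\ref{rbc}) with $\uplambda=1$ and (\ref{isom}) on the generic matrix $R=(x_{ij})$.
  \item Obtain the ideal $I$ in $A$ with $\B^i_1(L)=V(I)$.
  \item Write down the candidate prime ideals $\p_j$ vanishing on the listed matrix families. In most cases these ideals are linear, or linear plus a single binomial such as $x_{12}x_{32}-1$ or $x_{12}x_{23}+x_{13}+x_{23}$.
\end{enumerate}

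Primality of each $\p_j$ is easy. In every case $A/\p_j$ is one of the following:
\begin{itemize}
  \item a polynomial ring;
  \item a localization $\C[a,a^{-1}]$, coming from the hyperbola $ab=c\neq 0$;
  \item a polynomial ring with one variable solved for, as in the $\p_3$ of $L_1$, where $x_{13}=-x_{12}x_{23}-x_{23}$.
\end{itemize}
All of these are domains, so no colon-ideal computation like the one in Proposition \ref{prop3.3} should be needed.

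To show that $V(I)=\bigcup_j V(\p_j)$, I will argue as in Proposition \ref{prop3.3}(2). The containments $I\subseteq\p_j$ are checked generator by generator. For the reverse inclusion, I check that the products of generators of the $\p_j$ lie in $\sqrt I$. Alternatively, I can solve the system for $I$ directly by case splits, such as $x_{11}=0$ or $x_{11}=-1$, which the isometry equations tend to force.

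With the decomposition in hand, the listed components are pairwise incomparable: they have distinct generic forms, and none is contained in another. Hence the number of components is $2$ or $3$ in every row, and in particular the variety is never irreducible. The dimension of $\B^i_1(L)$ is the maximum of the component dimensions, read off from the number of free parameters: $2$ for $L_1$, at most $2$ for $L_2$, and at most $1$ for the rest. This gives the bound of at most $3$.

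The main obstacle is the completeness of each decomposition, namely proving that no further solutions exist beyond the tabulated families. This is a Gröbner-basis or case-analysis computation for each algebra, and it is especially delicate for the families $A_{\upalpha}$ and $C_{\upalpha}$. For those families, the special parameter values must be handled separately: for instance $\upalpha=1$ in $C_{\upalpha}$, where $ab=\frac{1}{\upalpha-1}$ is undefined. I would first try a Gröbner basis over $\C(\upalpha)$ to get the generic answer, and then redo the computation at each specialization where a leading coefficient vanishes, checking whether the number of components or the dimension changes there.
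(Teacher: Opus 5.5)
Your route is the paper's own: the corollary is Table~\ref{tab2} read row by row. Your isomorphism-invariance reduction, your primality checks, and your value $\dim\B^i_1(L_1)=2$ are all fine. (The entry ``$3$'' in Table~\ref{tab2} contradicts the paper's own example, which finds three $2$-dimensional components; the bound ``at most $3$'' is unaffected.) The genuine gap is the step you deferred as the ``main obstacle''. The specialization $\upalpha=1$ of $C_\upalpha$ does not merely need separate checking: it is a counterexample. So your concluding sentence, that every row has $2$ or $3$ components, cannot be established.

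Concretely, let $C_1$ be spanned by $x,y,z$ with $[x,y]=y$, $[x,z]=z$, $[y,z]=x$, $\upomega(y,z)=2$. This is $C_\upalpha$ at $\upalpha=1$ in the normalization $[x,z]=\upalpha z$, $\upomega(y,z)=1+\upalpha$, which is the one reproducing the condition $x_{21}x_{31}=\frac{1}{\upalpha-1}$ of Table~\ref{tab2}. In any case it satisfies (\ref{Jac}) with $\upomega\neq 0$, so it falls under the statement. Now compute $\B^i_1(C_1)$:
\begin{enumerate}
\item Isometry forces $x_{12}=x_{13}=0$ and $x_{22}x_{33}-x_{23}x_{32}=1$.
\item Using this, for general $\upalpha$ the $x$-coefficient of (\ref{rbc}) with $\uplambda=1$ on the pair $(y,z)$ reads $(x_{22}+x_{33}+1)x_{11}+(\upalpha-1)x_{21}x_{31}=1$. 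At $\upalpha=1$ this becomes $(x_{22}+x_{33}+1)x_{11}=1$, so $x_{22}+x_{33}+1\neq 0$.
\item At $\upalpha=1$, the $z$-coefficient on $(x,y)$ is $x_{23}(x_{22}+x_{33}+1)=0$, and the $y$-coefficient on $(x,z)$ is $x_{32}(x_{22}+x_{33}+1)=0$. Hence $x_{23}=x_{32}=0$.
\item The $y$-coefficient on $(x,y)$ becomes $x_{22}(x_{22}+1)=0$. Together with $x_{22}x_{33}=1$ this gives $x_{22}=x_{33}=-1$, and then $x_{11}=-1$.
\item Finally, the $x$-coefficients on $(x,y)$ and $(x,z)$ reduce to $-x_{21}=0$ and $-x_{31}=0$.
\end{enumerate}
So $\B^i_1(C_1)=\{-\mathrm{id}\}$, which is irreducible and $0$-dimensional. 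Thus the statement fails for $L=C_1$, and likewise Corollary~\ref{coro3.10}. Neither your argument nor the paper's table-reading can be completed without excluding $\upalpha=1$. For $\upalpha\neq 0,\pm1$ the two tabulated components are indeed all of $\B^i_1(C_\upalpha)$.
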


Recall that any finite-dimensional non-Lie complex simple $\upomega$-Lie algebras must be one of $\{A_{\upalpha}, B, C_{\upalpha}\}$; see \cite[Theorem 1.7]{CZ17}. Therefore,

\begin{coro}\label{coro3.10}
Let $L$ be a finite-dimensional non-Lie simple $\upomega$-Lie algebra over $\C$. Then
the affine variety $\B^i_1(L)$ is $1$-dimensional.  
\end{coro}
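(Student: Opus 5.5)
The plan is to reduce the statement to the three explicit algebras and then read the dimensions off Table \ref{tab2}. By \cite[Theorem 1.7]{CZ17} (equivalently \cite[Theorem 7.1]{CZ17}), a finite-dimensional non-Lie simple complex $\upomega$-Lie algebra is $3$-dimensional and isomorphic to one of $A_{\upalpha}$, $B$, or $C_{\upalpha}$ for an admissible parameter $\upalpha$. The first step is to check that the affine variety $\B^i_1(L)$ is an isomorphism invariant up to isomorphism of varieties. If $\phi:L\ra L'$ is an isomorphism of $\upomega$-Lie algebras (so $\phi$ preserves both the bracket and $\upomega$), then $R\mapsto \phi R\phi^{-1}$ is a linear automorphism of the space of endomorphisms. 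It carries the conditions (\ref{rbc}) with $\uplambda=1$ and (\ref{isom}) on $L$ exactly to the same conditions on $L'$. Hence $\dim \B^i_1(L)$ depends only on the isomorphism class, and it suffices to treat the three normal forms.

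Next, for each of $A_{\upalpha}$, $B$, $C_{\upalpha}$, I would follow the procedure of Section \ref{sec3} with $\uplambda=1$ and the isometry condition added in Step (3), as in Remark \ref{rem3.1}. This produces the defining ideal in $A=\C[x_{ij}]$. Then, as in Proposition \ref{prop3.3}, I would decompose the radical into the prime ideals $\p_j$ listed in Table \ref{tab2}. In each case the components are either a single point (the operator $-\mathrm{id}$) or a one-parameter family. For $B$ these families are two lines. For $A_{\upalpha}$ the family is a line parametrized by $a$, with $x_{31}$ determined by $a$. For $C_{\upalpha}$ the family is the hyperbola $ab=\frac{1}{\upalpha-1}$, which is irreducible of dimension $1$ because $\upalpha\neq 1$ in the classification.

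The dimension follows because each prime $\p_j$ has a quotient $A/\p_j$ that is either $\C$, $\C[a]$, or $\C[a,a^{-1}]$. These have Krull dimension $0$ or $1$, with at least one component of dimension $1$ in every case. The Krull dimension of $\B^i_1(L)$ is the maximum over its irreducible components, so it equals $1$.

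The main obstacle is the correctness and completeness of the decomposition for $A_{\upalpha}$ and $C_{\upalpha}$ uniformly in the parameter $\upalpha$. The elimination must not silently divide by an expression in $\upalpha$ that vanishes at special values, and no extra components may appear. I would handle this by computing the Gröbner basis over $\C(\upalpha)$ and then separately checking the finitely many values of $\upalpha$ where leading coefficients vanish. Also, $\p_1\cdots\p_k\subseteq I$ would be verified as in Proposition \ref{prop3.3}(2), which shows that $V(I)$ is exactly the union of the listed components.
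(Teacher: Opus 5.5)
Your route is the same as the paper's: reduce via the classification of non-Lie simple $\upomega$-Lie algebras to $A_{\upalpha}$, $B$, $C_{\upalpha}$, then read the dimensions off Table~\ref{tab2}. Your conjugation remark on isomorphism invariance is a harmless addition that the paper leaves implicit.

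The weak step is the assertion ``$\upalpha\neq 1$ in the classification'', on which your lower bound for $C_{\upalpha}$ rests. The relation $ab=\frac{1}{\upalpha-1}$ in Table~\ref{tab2} is exactly what one gets for $C_{\upalpha}$: $[x,y]=y$, $[x,z]=\upalpha z$, $[y,z]=x$, $\upomega(y,z)=1+\upalpha$. At $\upalpha=1$ this is a genuine non-Lie simple $\upomega$-Lie algebra. The $\upomega$-Jacobi identity reads $2x=2x$, and any nonzero ideal contains $x$, hence everything. It is not isomorphic to $B$ or to any $A_{\upbeta}$. An isomorphism preserves $\ker(\upomega)=\ideal{x}$, and $\mathrm{ad}_x$ induces on $L/\ker(\upomega)$:
\begin{itemize}
\item the identity here;
\item a nontrivial Jordan block for $B$;
\item a nilpotent map for $A_{\upbeta}$.
\end{itemize}
So any complete list must contain this algebra as the $\upalpha=1$ member, and there the hyperbola is empty.

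Moreover, the variety really is a point there: $\B^i_1(C_1)=\{-\mathrm{id}\}$.
\begin{itemize}
\item Isometry forces $R(x)=cx$, and forces the block $M$ of $R$ on $\ideal{y,z}$ (modulo $x$) to satisfy $\det M=1$.
\item Since $\mathrm{ad}_x$ is the identity on $\ideal{y,z}$, the $\ideal{y,z}$-part of the Rota-Baxter identity on pairs $(x,v)$, $v\in\ideal{y,z}$, reads $cM=(c+1)M+M^2$. Thus $M^2+M=0$, and $M=-I$ because $M$ is invertible.
\item The $x$-coefficient of the identity on $(y,z)$ reads $1=c(\operatorname{tr}M+1)=-c$, so $c=-1$.
\item The $x$-coefficient of the identity on $(x,v)$ then gives $x_{21}=x_{31}=0$.
\end{itemize}
So your claim that every case has a $1$-dimensional component fails for $C_1$. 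The corollary, together with the $C_{\upalpha}$ row of Table~\ref{tab2} on which the paper's one-line proof relies, holds only after excluding $\upalpha=1$. Your proposed separate check of special values of $\upalpha$ is exactly what would expose this, but it cannot repair it.
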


\section{Rota-Baxter Operators and Hom-Lie Algebras}  \label{sec4}
\setcounter{equation}{0}
\renewcommand{\theequation}
{4.\arabic{equation}}
\setcounter{theorem}{0}
\renewcommand{\thetheorem}
{4.\arabic{theorem}}

\noindent Motivated by Theorem \ref{thm2.12}, this section computes all compatible Rota-Baxter operators $R$ of weight 0 with $R^2=0$ on $4$-dimensional non-Lie complex $\upomega$-Lie algebras and as an application, we also study the induced $4$-dimensional  Hom-Lie algebra structures. Note that our calculations in this section will be based
on the classification of $4$-dimensional non-Lie complex $\upomega$-Lie algebras appeared in \cite[Corollary 1.6]{CZ17}.
Throughout  we define 
$$\B^{s}(L):=\{R\in\B^c(L)\mid R^2=0\}.$$

We first give an example of explicit computations on $\B^{s}(L)$ that is an irreducible affine variety and explore the Hom-Lie algebra structure induced by $\B^{s}(L)$. Recall the non-Lie $4$-dimensional complex $\upomega$-Lie algebra  $L_{1,2}$ that can be spanned by $\{e,x,y,z\}$ with the nonzero generating relations:
\begin{equation}
\label{eq4.1}
[e,x]=z,~[e,y]=-e,~ [x,y]=y,~  [y,z]=z,~ \upomega(x,y)=1.
\end{equation}
Performing the procedure appeared in the beginning of Section \ref{sec3} obtains that
$\B^c(L_{1,2})$ has three irreducible components $V(\p_1), V(\p_2)$, and $V(\p_3)$ that consist of the following matrices 
$$R_1:=\begin{pmatrix}
      0& 0 &0 &a  \\
     0 &0&0&b\\
     c&d&0&r\\
     0&0&0&0  
\end{pmatrix},~ R_2:=\begin{pmatrix}
      0& 0 &0 &a  \\
     b &0&0&c\\
     d&-a&0&r\\
     0&0&0&0  
\end{pmatrix},~R_3:=\begin{pmatrix}
      0& 0 &0 &a  \\
     b &c&d&r\\
     s&t&-c&u\\
     0&0&0&0  
\end{pmatrix}$$
respectively. Here, the parameters $a,b,c,d,r$ in $R_1$ and $R_2$ are free over $\C$, while those parameters in $R_3$
are subject to the following relations:
\begin{equation} \label{eq4.2}
\begin{aligned}
ab+du+cr&=0\quad\quad  & as+rt-cu&=0&  \quad\quad &dt+c^2=0\\
bt-cs&=0 \quad\quad  &bc+ds &=0.
\end{aligned}
\end{equation}
Clearly, we see that $(R_1)^2\neq 0$ and $(R_2)^2\neq 0$. It follows from (\ref{eq4.2}) that
$$(R_3)^2=0.$$
This shows that

\begin{prop}
The set $\B^s(L_{1,2})$ is an irreducible affine variety consisting of all $R_3$ above, where entries of $R_3$ are subject to
the relations in $(\ref{eq4.2})$. In particular, the vanishing ideal of $\B^s(L_{1,2})$ in $\C[x_{ij}\mid 1\leqslant i,j\leqslant 4]$
can be generated by $\{x_{11},x_{12},x_{13},x_{41},x_{42},x_{43},x_{44}\}$ and
\begin{equation*} 
\left\{\begin{aligned}
x_{14}x_{21} + x_{23}x_{34} - x_{24}x_{33},&\quad\quad  &  x_{14}x_{31} + x_{24}x_{32} + x_{33}x_{34}, & 
 \quad\quad & x_{23}x_{32} + x_{33}^2,\\
x_{21}x_{32} + x_{31}x_{33},&\quad\quad  &   x_{21}x_{33} - x_{23}x_{31}, & 
 \quad\quad & x_{22} + x_{33}
\end{aligned}\right\}.
\end{equation*}
Moreover, these generators  form a Gr\"obner basis for the vanishing ideal of $\B^s(L_{1,2})$, with respect to 
the lexicographic monomial ordering $x_{11}>x_{12}>\cdots>x_{14}>x_{21}>\cdots>x_{44}$.
\end{prop}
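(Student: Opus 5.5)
The plan is to write $\B^s(L_{1,2})=\B^c(L_{1,2})\cap V(\langle \text{entries of } R^2\rangle)$ and use the decomposition $\B^c(L_{1,2})=V(\p_1)\cup V(\p_2)\cup V(\p_3)$ stated just above, treating each component separately. Then
\[
\B^s(L_{1,2})=\bigcup_{j=1}^3\bigl(V(\p_j)\cap\{R^2=0\}\bigr).
\]

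\emph{Component $V(\p_3)$.} For $V(\p_3)$ I multiply out $R_3^2$ with the basis order $e,x,y,z$. Because the first and last rows of $R_3$ are $(0,0,0,a)$ and $0$, every entry of $R_3^2$ is one of the expressions in (\ref{eq4.2}), up to sign. So $V(\p_3)\subseteq\{R^2=0\}$.

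\emph{Components $V(\p_1)$ and $V(\p_2)$.} Here the point is not only that $R_1^2\neq 0$ and $R_2^2\neq0$ generically. I must show that the loci $V(\p_1)\cap\{R^2=0\}$ and $V(\p_2)\cap\{R^2=0\}$ lie inside $V(\p_3)$.
\begin{itemize}
\item For $R_1$, the square has entries $ac$, $ad$, $bc$, $bd$ (times basis vectors). So the square-zero locus is $\{a=b=0\}\cup\{c=d=0\}$.
\item On each of these pieces, $R_1$ has the shape of $R_3$ with the relevant parameters zero, and one checks directly that (\ref{eq4.2}) holds.
\item For $R_2$ the same routine check applies: compute $R_2^2$, solve the few monomial/binomial conditions, and verify (\ref{eq4.2}) on each piece.
\end{itemize}
This gives $\B^s(L_{1,2})=V(\p_3)$ as sets.

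\emph{The vanishing ideal.} It remains to show that the ideal $P$ generated by the listed polynomials is exactly $\p_3$, that it is prime, and that the listed generators form a lex Gröbner basis. I proceed in three steps.
\begin{enumerate}
\item The linear generators kill $x_{11},x_{12},x_{13},x_{41},\dots,x_{44}$ and set $x_{22}=-x_{33}$. So $A/P$ is a quotient of $C:=\C[x_{14},x_{21},x_{23},x_{24},x_{31},x_{32},x_{33},x_{34}]$ by the five quadrics. Their leading monomials in lex order are $x_{14}x_{21}$, $x_{14}x_{31}$, $x_{21}x_{32}$, $x_{21}x_{33}$, $x_{23}x_{32}$. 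I check that all $S$-polynomials reduce to zero, case by case as in Lemma \ref{lem3.4}; pairs with coprime leading monomials are automatic.
\item I localize at $x_{23}$. The relations give, in order,
\[
x_{32}=-\frac{x_{33}^2}{x_{23}},\qquad x_{31}=\frac{x_{21}x_{33}}{x_{23}},\qquad x_{34}=\frac{x_{24}x_{33}-x_{14}x_{21}}{x_{23}}.
\]
The remaining two quadrics then vanish identically; this is a short verification. Hence $(A/P)[1/x_{23}]\cong\C[x_{14},x_{21},x_{23},x_{24},x_{33}][1/x_{23}]$, which is a $5$-dimensional domain.
\item As in the proof of Proposition \ref{prop3.3}, it then suffices to show that $x_{23}$ is a nonzerodivisor modulo $P$, i.e.\ $P:x_{23}=P$. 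I compute this via $P\cap\langle x_{23}\rangle$, using the auxiliary variable trick and a Gröbner basis of $zP+(1-z)\langle x_{23}\rangle$, and check that the result is $x_{23}\cdot P$.
\end{enumerate}
Primality then gives radicality, and $V(P)=V(\p_3)$, so $P$ is the vanishing ideal of $\B^s(L_{1,2})$ and this variety is irreducible.

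The main obstacle is the nonzerodivisor claim in step 3: a Buchberger computation with several $S$-pairs. A secondary concern is ensuring that the $R_2$ square-zero locus contributes no extra component outside $V(\p_3)$.
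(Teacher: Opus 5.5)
Your strategy is the paper's: intersect each component $V(\p_j)$ of $\B^c(L_{1,2})$ with $\{R^2=0\}$. You are also right that the paper's remark $(R_1)^2\neq 0$, $(R_2)^2\neq 0$ only excludes those components generically. What must actually be shown is that $V(\p_1)\cap\{R^2=0\}$ and $V(\p_2)\cap\{R^2=0\}$ lie in $V(\p_3)$.

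\textbf{Error in the $R_1$ step.} Your computation of $R_1^2$ is wrong. Since $R_1(e)=az$, $R_1(x)=bz$, $R_1(y)=ce+dx+rz$ and $R_1(z)=0$, one gets $R_1^2(e)=R_1^2(x)=R_1^2(z)=0$ and $R_1^2(y)=(ac+bd)z$. So the square has the single nonzero entry $ac+bd$, not entries $ac,ad,bc,bd$. Hence $V(\p_1)\cap\{R^2=0\}$ is the hypersurface $\{ac+bd=0\}$. This is strictly larger than $\{a=b=0\}\cup\{c=d=0\}$ (take $a=b=c=1$, $d=-1$). Verifying (\ref{eq4.2}) on your two pieces therefore does not prove the containment you need, and the step fails as written.

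\textbf{Repair.} The conclusion still holds and the fix is short. $R_1$ is $R_3$ with $(a,b,c,d,r,s,t,u)$ replaced by $(a,0,0,0,b,c,d,r)$. Under this substitution four relations of (\ref{eq4.2}) vanish identically, and $as+rt-cu$ becomes $ac+bd$. For $R_2$, one finds $R_2^2=0$ iff $ab=0$ and $a(c-d)=0$. Both pieces $\{a=0\}$ and $\{b=0,\ c=d\}$ satisfy (\ref{eq4.2}).

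A simpler route avoids the case analysis altogether. $R_1$ and $R_2$ already have the shape of $R_3$ (with $x_{22}=x_{33}=0$). Your own $V(\p_3)$ computation shows that for every matrix of that shape the entries of $R^2$ are exactly the expressions in (\ref{eq4.2}). Hence $\B^s(L_{1,2})=V(\p_3)$ directly.

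\textbf{Rest of the plan.} The remainder is correct. The five quadrics do form a lex Gr\"obner basis, the localization at $x_{23}$ is as you state, and $x_{23}$ is indeed a nonzerodivisor. For that last point you can bypass the auxiliary-variable computation:
\begin{itemize}
\item The lex initial ideal $\ideal{x_{14}x_{21},x_{14}x_{31},x_{21}x_{32},x_{21}x_{33},x_{23}x_{32}}$ is an edge ideal.
\item Its independence complex is shellable: take the facets $\{x_{14},x_{23},x_{33}\}$, $\{x_{23},x_{31},x_{33}\}$, $\{x_{21},x_{23},x_{31}\}$, $\{x_{14},x_{32},x_{33}\}$, $\{x_{31},x_{32},x_{33}\}$ in this order, coned over $x_{24},x_{34}$.
\item So $A/P$ is Cohen--Macaulay of dimension $5$, hence unmixed.
\item Since $V(P)\cap V(x_{23})$ has dimension $4$, $x_{23}$ lies in no associated prime.
\end{itemize}
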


\begin{prop}
The affine variety $\B^s(L_{1,2})$ is $5$-dimensional. 
\end{prop}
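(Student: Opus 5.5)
We need to prove that $\B^s(L_{1,2})$ has Krull dimension $5$. By the previous proposition, $\B^s(L_{1,2})=V(\mathfrak{q})$, where $\mathfrak{q}$ is the stated (prime) vanishing ideal. So it suffices to compute $\dim A'/\mathfrak{q}$, where $A'=\C[x_{ij}\mid 1\leqslant i,j\leqslant 4]$. First I would strip off the linear generators. Killing $x_{11},x_{12},x_{13},x_{41},x_{42},x_{43},x_{44}$ and eliminating $x_{22}=-x_{33}$ gives
$$A'/\mathfrak{q}\cong \C[x_{14},x_{21},x_{23},x_{24},x_{31},x_{32},x_{33},x_{34}]/\mathfrak{q}',$$
a quotient of a polynomial ring in $8$ variables. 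Here $\mathfrak{q}'$ is generated by the five quadrics
$$x_{14}x_{21}+x_{23}x_{34}-x_{24}x_{33},\quad x_{14}x_{31}+x_{24}x_{32}+x_{33}x_{34},\quad x_{23}x_{32}+x_{33}^2,\quad x_{21}x_{32}+x_{31}x_{33},\quad x_{21}x_{33}-x_{23}x_{31}.$$
In the notation of $R_3$ these are exactly the relations (\ref{eq4.2}), with $a=x_{14}$, $b=x_{21}$, $c=x_{33}$, $d=x_{23}$, $r=x_{24}$, $s=x_{31}$, $t=x_{32}$, $u=x_{34}$.

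Since $\mathfrak{q}'$ is prime, the variety is irreducible, and its dimension equals the dimension of any nonempty open subset. I would localize at $x_{23}$ (that is, $d\neq0$). On this chart:
\begin{itemize}
\item $dt+c^2=0$ gives $t=-c^2/d$.
\item $bc+ds=0$ gives $s=-bc/d$; then $bt-cs=-bc^2/d+bc^2/d=0$ holds automatically.
\item $ab+du+cr=0$ gives $u=-(ab+cr)/d$.
\item Substituting into $as+rt-cu$ gives $-abc/d-rc^2/d+c(ab+cr)/d=0$, so this relation also holds automatically.
\end{itemize}
Hence
$$(A'/\mathfrak{q})[x_{23}^{-1}]\cong \C[a,b,c,r,d][d^{-1}],$$
with $a,b,c,r,d$ algebraically independent. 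This ring has dimension $5$. The chart is nonempty, since $d=1$ and all other parameters zero, with $s=t=u=0$, satisfies the relations. Therefore $\dim \B^s(L_{1,2})=5$.

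The main obstacle is justifying that the open chart $x_{23}\neq0$ computes the dimension of the whole variety. This requires irreducibility, i.e. that $\mathfrak{q}$ is prime, which the previous proposition provides. If one wanted to avoid relying on primality, the fallback is to read the dimension off the given Gröbner basis. The leading monomials under lex order are
$$x_{11},\ x_{12},\ x_{13},\ x_{41},\ x_{42},\ x_{43},\ x_{44},\ x_{14}x_{21},\ x_{14}x_{31},\ x_{23}x_{32},\ x_{21}x_{32},\ x_{21}x_{33},\ x_{22}.$$
One then finds a maximal set of variables containing no leading monomial. The set $\{x_{14},x_{23},x_{24},x_{33},x_{34}\}$ works, and no $6$-element set does: each of the pairs $\{x_{14},x_{21}\}$, $\{x_{14},x_{31}\}$, $\{x_{21},x_{32}\}$, $\{x_{23},x_{32}\}$, $\{x_{21},x_{33}\}$ must be broken. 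The standard result that the dimension equals the maximal size of an independent set of variables modulo the initial ideal then gives $5$, matching the chart computation.
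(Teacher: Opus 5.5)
Your proof is correct, but your primary route differs from the paper's. The paper's proof is essentially your fallback. It applies Derksen--Kemper's Algorithm 1.2.4 to the lex Gr\"obner basis from the preceding proposition. It exhibits the $11$-element set $M=\{x_{11},x_{12},x_{13},x_{14},x_{21},x_{22},x_{23},x_{41},x_{42},x_{43},x_{44}\}$, which meets every leading monomial, and concludes that the dimension is $16-11=5$. You phrase the same count through the complementary independent set.

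Your ``no $6$-element set'' step is not finished as written. Once the eight variables that are themselves leading monomials are excluded, the pairs $\{x_{14},x_{31}\}$, $\{x_{23},x_{32}\}$, $\{x_{21},x_{33}\}$ are pairwise disjoint. So at least three of the remaining eight variables must be dropped, and that is the sentence you should add.

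Your main argument instead localizes at $x_{23}$. It shows that the coordinate ring becomes $\C[a,b,c,d,r][d^{-1}]$, and then uses irreducibility to transfer the dimension. This gives more than the paper's count: an explicit birational parametrization by the five free parameters $a,b,c,d,r$, obtained by hand substitution without any Gr\"obner basis machinery. Its cost is the reliance on the irreducibility asserted in the previous proposition, whereas the paper's count needs only the leading monomials.

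You can remove that dependence directly. On the slice $x_{23}=0$ the relation $dt+c^2=0$ forces $c=0$, and the remaining relations reduce to $ab=bt=as+rt=0$ in $a,b,r,s,t,u$. That locus has dimension $4$, so any component of $\B^s(L_{1,2})$ missing your chart has dimension at most $4$.

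One harmless slip: in $R_3$ one has $x_{22}=c$ and $x_{33}=-c$. So your dictionary should read $c=x_{22}=-x_{33}$, not $c=x_{33}$; with $c=x_{33}$ the five quadrics do not literally become (\ref{eq4.2}). Since $c\mapsto -c$ is a ring automorphism, nothing downstream changes.
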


\begin{proof}
Note that the Krull dimension of $\B^s(L_{1,2})$ equals the dimension of the coordinate ring, which is defined as the dimension of the vanishing ideal of $\B^s(L_{1,2})$ in the sense of \cite[Section 1.2.5]{DK15}. By \cite[Algorithm 1.2.4]{DK15}, we may choose $M=\{x_{11},x_{12},x_{13},x_{14},x_{21},x_{22},x_{23},x_{41},x_{42},x_{43},x_{44}\}$. Hence, $|M|=11$ and 
the dimension we are looking for equals $16-|M|=16-11=5.$
\end{proof}

\begin{prop}\label{prop4.3}
The Hom-Lie algebra induced by $R_3$ via Theorem $\ref{thm2.12}$ is solvable of length $2$.
\end{prop}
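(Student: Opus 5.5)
The plan is to compute the bracket $[-,-]_{R_3}$ of (\ref{eq2.3}) explicitly on the basis $\{e,x,y,z\}$ of $L_{1,2}$ and then read off the derived series directly. Theorem \ref{thm2.12} already guarantees that $(L_{1,2},[-,-]_{R_3},R_3)$ is a Hom-Lie algebra, so only solvability needs to be checked. I take the derived series in the usual sense: $\g^{(1)}=[\g,\g]_{R_3}$ and $\g^{(k+1)}=[\g^{(k)},\g^{(k)}]_{R_3}$.

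\emph{Step 1: the action of $R_3$.} Reading off the rows of $R_3$ gives
$$R_3(e)=a z,\quad R_3(x)=b e+c x+d y+r z,\quad R_3(y)=s e+t x-c y+u z,\quad R_3(z)=0.$$

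\emph{Step 2: the six brackets.} Using only the relations (\ref{eq4.1}) and skew-symmetry, in particular $[e,z]=[x,z]=0$, I compute
\begin{align*}
[e,x]_{R_3}&=c z-d e, & [e,y]_{R_3}&=c e+(t-a) z, & [e,z]_{R_3}&=0,\\
[x,y]_{R_3}&=-b e-(r+s) z, & [x,z]_{R_3}&=d z, & [y,z]_{R_3}&=-c z.
\end{align*}
Every term involving $x$ or $y$ cancels. For instance, in $[x,y]_{R_3}$ the contributions $c[x,y]=c y$ and $-c[x,y]=-c y$ cancel. I will recheck each of these carefully, since the whole argument depends on them.

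\emph{Step 3: the derived series.} From Step 2, $\g^{(1)}\subseteq\ideal{e,z}$. Since $[e,z]_{R_3}=0$, we get $\g^{(2)}=[\g^{(1)},\g^{(1)}]_{R_3}=0$, so the induced Hom-Lie algebra is solvable of length at most $2$. To get length exactly $2$ I need $\g^{(1)}\neq 0$. Any one nonzero bracket suffices, for example $c\neq0$, or $d\neq0$, or $b\neq 0$. Such points certainly exist on $\B^s(L_{1,2})$: for example $a=t=1$, $b=c=d=r=s=u=0$ satisfies (\ref{eq4.2}).

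The main obstacle is not the computation itself but the degenerate locus where all six brackets vanish. By Step 2 this locus is $c=d=b=0$, $t=a$, $r=-s$. Substituting into (\ref{eq4.2}), the only relation that is not automatic is $as+rt-cu=a(s+r)=0$, and this holds there as well. So there are nonzero $R_3$, for instance $a=t$ and $r=-s$ with $a\neq 0$, for which the induced bracket is identically zero. On this locus the Hom-Lie algebra is abelian, with length $1$, or length $0$ if one counts the trivial case.

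I would therefore prove the statement in the form: for every $R_3\in\B^s(L_{1,2})$ the induced Hom-Lie algebra satisfies $\g^{(2)}=0$, and it has solvable length exactly $2$ precisely off the closed subset $\{b=c=d=0,\ t=a,\ r+s=0\}$, that is, for a generic element of $\B^s(L_{1,2})$. I would make this generic qualifier explicit in the write-up.
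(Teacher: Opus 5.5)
Your proof is the paper's own argument. You compute the same six brackets, all of which check out, and then use $[\g,\g]_{R_3}\subseteq\ideal{e,z}$ together with $[e,z]_{R_3}=0$ to get $\g^{(2)}=0$.

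Your caveat is also correct, and the paper misses it. The paper never verifies $\g^{(1)}\neq 0$, and the bracket does vanish identically on $\{b=c=d=0,\ t=a,\ r+s=0\}\cap\B^s(L_{1,2})$. So ``length $2$'' holds only off that locus.

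One slip needs fixing. Your witness $a=t=1$, $b=c=d=r=s=u=0$ lies in that degenerate locus: there $[e,y]_{R_3}=(t-a)z=0$ and every other bracket vanishes too. It therefore does not show $\g^{(1)}\neq 0$. Take instead $a=1$ with all other parameters $0$. This satisfies (\ref{eq4.2}) and gives $[e,y]_{R_3}=-z\neq 0$.
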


\begin{proof}
We write $(\g,[-,-])$ for this Hom-Lie algebra. Together with (\ref{eq4.1}), applying Theorem \ref{thm2.12} 
implies  that
the nonzero bracket products in $\g$ are given by
\begin{equation*}
\begin{aligned}
[e,x] & =  -de+cz\quad\quad  & [e,y] & = ce-(a - t)z &  \quad\quad & [x,y] & = -be-(r +s)z\\
[x,z] & = dz \quad\quad  & [y,z] & = -cz.
\end{aligned}
\end{equation*}
Clearly, $[\g,\g]$ is contained in the subspace $\ideal{e,z}$. However, since $[e,z]=0$, it follows that
$\g^{(2)}=[\g^{(1)},\g^{(1)}]=[[\g,\g],[\g,\g]]\subseteq [\ideal{e,z},\ideal{e,z}]=0$. Therefore, $\g$ is a $4$-dimensional solvable Hom-Lie algebra of length $2$.
\end{proof}

\begin{rem}{\rm
The induced Hom-Lie algebra in Proposition \ref{prop4.3} might be nilpotent for specific parameters. For example, when $b=c=d=0$, the element $z$ is a central element and $[\g,\g]\subseteq\ideal{z}$. Thus,
$[[\g,\g],\g]\subseteq [\ideal{z},\g]=0$ and $\g$ is nilpotent of class $2$ in this case. 
\hbo}\end{rem}

The affine variety $\B^{s}(L)$ might have multiple irreducible components.  Consider the $\upomega$-Lie algebra  $L_{1,8}$ that can be spanned by $\{e,x,y,z\}$ with the nonzero generating relations:
\begin{equation}
\label{eq4.3}
[e,x]=e+y,~[e,y]=-e+z,~ [x,y]=y,~  [y,z]=z,~ \upomega(e,x)=\upomega(x,y)=1.
\end{equation}
Using the procedure in Section \ref{sec3} again, we deduce that
$\B^c(L_{1,8})$ has three irreducible components $V(\p_1), V(\p_2)$, and $V(\p_3)$ that consist of the following matrices 
$$T_1:=\begin{pmatrix}
      b&-b &0 &a \\
     b &-b&0&a\\
     -b&b&0&c\\
     0&0&0&0  
\end{pmatrix},~ T_2:=\begin{pmatrix}
     a-b &-a & -b& -c \\
     a-b &-a&-b&-c\\
     b-a&a&b&c\\
     0&0&0&0  
\end{pmatrix},~T_3:=\begin{pmatrix}
      0& 0 &0 &a  \\
     b&0&c&d  \\
      0&0&0&r  \\
       0&0&0&0  
\end{pmatrix}.$$
respectively, where $ab+cr=0$ in $T_3$. Clearly, $\dim(V(\p_1))=\dim(V(\p_2))=3$ and $\dim(V(\p_3))=4$.
This means that

\begin{prop}
The affine variety $\B^s(L_{1,8})$ is $4$-dimensional and has three irreducible components. 
\end{prop}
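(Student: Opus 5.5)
The plan is to read off $\B^s(L_{1,8})=\{R\in\B^c(L_{1,8})\mid R^2=0\}$ from the three components of $\B^c(L_{1,8})$ computed above, by intersecting each component with the closed condition $R^2=0$.

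\textbf{Step 1: compute $T_j^2$ for each $j$.}

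For $T_1$, write $v=(b,-b,0,a)$ for the common form of the first two rows (the third row is $(-b,b,0,c)$ and the last row is zero). Every first column entry of $T_1^2$ is a row of $T_1$ times the column $(b,b,-b,0)^T$, namely $b^2-b^2+0+0=0$. The second column is the negative of this, so it also vanishes. The third column of $T_1$ is zero. The fourth column is a row times $(a,a,c,0)^T$, which gives $ba-ba=0$ for the first two rows and $-ba+ba=0$ for the third. Hence $T_1^2=0$ identically.

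For $T_2$, write $u=(a-b,-a,-b)$ for the first three entries of the first two rows; the third row is $-u$. The upper $3\times 3$ block is the rank-one matrix $w\,u$ with $w=(1,1,-1)^T$. The relevant scalar is $u\cdot w=(a-b)-a+b=0$, so the block squares to zero. The fourth column of $T_2^2$ is the block applied to $(-c,-c,c)^T=-c\,w$, which again gives $0$. So $T_2^2=0$ identically as well.

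For $T_3$, the only nonzero entry of $T_3^2$ is in position $(2,4)$, and it equals $b\cdot a+c\cdot r$. This vanishes by the defining relation $ab+cr=0$, so $T_3^2=0$ on the whole component.

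\textbf{Step 2: conclude.}

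Each component $V(\p_j)$ lies entirely in $\B^s(L_{1,8})$. Since $\B^s(L_{1,8})\subseteq\B^c(L_{1,8})$, we get $\B^s(L_{1,8})=\B^c(L_{1,8})=V(\p_1)\cup V(\p_2)\cup V(\p_3)$.

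Irreducibility of each component is inherited from the primality of $\p_j$, as established by the procedure of Section \ref{sec3}. This can be double-checked directly:
\begin{itemize}
\item $V(\p_1)$ and $V(\p_2)$ are linear spaces.
\item $V(\p_3)$ is a product of the affine line in $d$ with the quadric cone $ab+cr=0$ in $\C^4$. That cone is irreducible because $ab+cr$ is an irreducible polynomial, being a nondegenerate quadratic form in $4$ variables.
\end{itemize}

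The components are irredundant. $V(\p_3)$ has dimension $4$, so it is not contained in either $3$-dimensional component. Conversely, a generic $T_1$ or $T_2$ has nonzero $(1,1)$ entry, while every $T_3$ has zero $(1,1)$ entry, so neither $V(\p_1)$ nor $V(\p_2)$ lies in $V(\p_3)$. Finally, $V(\p_1)\ne V(\p_2)$: generic $T_2$ has nonzero $(1,3)$ entry, whereas $T_1$ never does. Neither contains the other since both are irreducible of the same dimension $3$.

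The dimension of $\B^s(L_{1,8})$ is therefore $\max(3,3,4)=4$. The component dimensions are checked as follows:
\begin{itemize}
\item $V(\p_1)$ has free parameters $a,b,c$, giving dimension $3$.
\item $V(\p_2)$ has free parameters $a,b,c$, giving dimension $3$.
\item $V(\p_3)$ has $5$ parameters subject to one nontrivial equation, giving dimension $4$.
\end{itemize}

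\textbf{Main obstacle.} The only substantive point is trusting the decomposition of $\B^c(L_{1,8})$ quoted above, which is a Gröbner-basis computation. Once that is taken as given, the argument reduces to the verification $T_j^2=0$ in Step 1, which is mechanical.
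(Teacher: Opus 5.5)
The statement is true, but your argument for the harder inclusion rests on a false equality. Your Step~1 computations ($T_1^2=T_2^2=0$ identically, and $T_3^2=0$ exactly when $ab+cr=0$) are correct, and so are your checks of irreducibility, irredundancy and dimension. So you do prove $V(\p_1)\cup V(\p_2)\cup V(\p_3)\subseteq\B^s(L_{1,8})$.

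The gap is the reverse inclusion. You get it from $\B^s(L_{1,8})\subseteq\B^c(L_{1,8})=V(\p_1)\cup V(\p_2)\cup V(\p_3)$, but that equality is false, and so is your conclusion $\B^s(L_{1,8})=\B^c(L_{1,8})$. Take $R(y)=e+y$ and $R(e)=R(x)=R(z)=0$.
\begin{itemize}
\item $R$ is compatible: $\upomega(e+y,x)=1-1=0$, and $\upomega(e+y,-)$ vanishes trivially on $e,y,z$, so the image of $R$ lies in $\ker(\upomega)$.
\item $[R(u),R(v)]=0$, because the image of $R$ is one-dimensional.
\item $R([R(u),v]+[u,R(v)])=0$: the brackets $[e+y,e]=e-z$, $[e+y,x]=e$, $[e+y,y]=-e+z$ and $[e+y,z]=z$ all have zero $y$-coordinate.
\end{itemize}
Hence $R\in\B^c(L_{1,8})$. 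But $R^2=R\neq0$, and $R$ is not of the form $T_1$, $T_2$ or $T_3$. There is even a $4$-parameter family in $\B^c(L_{1,8})$ with rows $(p,-p,0,\alpha)$, $(p,-p,0,\beta)$, $(-p,p,0,\gamma)$, $0$; it squares to zero only when $p(\alpha-\beta)=0$. So $T_1,T_2,T_3$ can only be the components of $\B^s(L_{1,8})$ itself, i.e.\ the output of the procedure of Section~\ref{sec3} with $R^2=0$ added to the equations. The sentence attributing them to $\B^c(L_{1,8})$ has to be read that way.

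With that reading, the proposition follows at once from the three listed components and their dimensions $3,3,4$, which is all the paper uses. Your Step~1 is then only a consistency check, not the step that carries the proof. If you want a self-contained argument for $\B^s(L_{1,8})\subseteq V(\p_1)\cup V(\p_2)\cup V(\p_3)$, you must solve the system with $R^2=0$ imposed:
\begin{itemize}
\item Compatibility gives $x_{42}=0$, $x_{41}=x_{43}$, $x_{32}=-x_{12}$, $x_{22}=x_{31}-x_{33}$ and $x_{13}=x_{11}+x_{22}$.
\item The Rota--Baxter equations for the pairs $(e,z)$ and $(y,z)$ then force $R(z)=0$.
\item The remaining equations reduce to $[R(e),R(x)]=\lambda_1R(e)$, $[R(e),R(y)]=\lambda_2R(e)$ and $[R(x),R(y)]=\lambda_3R(e)$, where $\lambda_1=x_{11}+x_{22}-x_{23}$, $\lambda_2=-(x_{11}+x_{12}+x_{33})$ and $\lambda_3=-(x_{21}+x_{31})$.
\item No bracket has an $x$-component, so $x_{12}\neq0$ forces $\lambda_1=\lambda_2=\lambda_3=0$. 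Together with $R^2=0$, this case gives exactly the points of $T_1$ with $b\neq0$ and of $T_2$ with $a\neq0$.
\item The case $x_{12}=0$, with $R^2=0$, gives $T_3$ and the slice $a=0$ of $T_2$.
\end{itemize}
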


We write $\g_j$ for the Hom-Lie algebra induced by the above $T_j$ for $j=1,2,3$. 

\begin{prop}
$(1)$ $\g_1$ is a nilpotent Hom-Lie algebra. $(2)$ $\g_2$ and $\g_3$ are  solvable Hom-Lie algebras but not nilpotent. 
\end{prop}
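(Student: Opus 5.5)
The plan is to compute the induced bracket $[-,-]_{T_j}$ of Theorem~\ref{thm2.12} explicitly on the basis $\{e,x,y,z\}$ for each $j=1,2,3$, and then read off the derived series and the lower central series. Here nilpotency and solvability refer to the underlying skew-symmetric bracket, exactly as in Proposition~\ref{prop4.3}. Following the row convention of Section~\ref{sec3}, $T_j(e_i)=\sum_k x_{ik}e_k$. For $T_1$ this gives $T_1(e)=T_1(x)=b(e-x)+az$, $T_1(y)=-b(e-x)+cz$ and $T_1(z)=0$. For $T_3$ it gives $T_3(e)=az$, $T_3(x)=be+cy+dz$, $T_3(y)=rz$ and $T_3(z)=0$, and similarly for $T_2$.

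For each pair $u<v$ of basis vectors I would compute $[u,v]_{T_j}=[T_j(u),v]+[u,T_j(v)]$ using the relations (\ref{eq4.3}), namely $[e,x]=e+y$, $[e,y]=-e+z$, $[x,y]=y$, $[y,z]=z$, with all other brackets of basis vectors zero. These six brackets are mechanical. For $T_1$ the images all lie in the span of $e-x$ and $z$, with $T_1(L_{1,8})\subseteq\langle e-x,z\rangle$. I therefore expect $[\g_1,\g_1]$ to lie in a small subspace $W$, probably $\langle e+y, z\rangle$ or a similar $2$-dimensional space. I then expect to show that $[W,\g_1]$ is contained in $\langle z\rangle$, and that $z$ is central in $\g_1$ because $T_1(z)=0$ and $[z,T_1(u)]\in[z,\langle e-x,z\rangle]=0$. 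This gives $\g_1^{3}=0$ or $\g_1^{4}=0$, which proves (1).

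For $\g_2$ and $\g_3$ I would first show that $[\g_j,\g_j]$ lies in an abelian subspace, most likely one contained in $\langle e,y,z\rangle$ or $\langle e,z\rangle$. For instance, in $\g_3$ one finds $[e,x]_{T_3}=c(z-e)$, and the other brackets likewise land in $\langle e,z\rangle$. I would then check that $[e,z]_{T_j}=0$, so that $\g_j^{(2)}=0$, as in the proof of Proposition~\ref{prop4.3}. This gives solvability.

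For non-nilpotency, the plan is to exhibit an element $u$ (in practice $x$ or $y$) such that $\mathrm{ad}_u$ restricted to the derived algebra has a nonzero eigenvalue. In $\g_3$, for example, this would be $z\mapsto \lambda z$ with $\lambda$ a nonzero polynomial in the parameters such as $c$ or $b$. Then $[u,[u,\dots,[u,z]]]=\lambda^k z\neq0$ for all $k$, so the lower central series stabilizes at a nonzero subspace.

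The main obstacle is this last step. The statement must be read for generic parameters, since degenerate choices such as $T_j=0$ give an abelian algebra. I would therefore identify the precise open condition on the parameters (for $T_3$ this is subject to $ab+cr=0$) under which the eigenvalue $\lambda$ is nonzero. I would also check that this condition is compatible with the defining equations of the component, so that the generic member of $V(\p_2)$, respectively $V(\p_3)$, is not nilpotent.
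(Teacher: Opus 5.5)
Your plan is correct and is essentially the paper's proof. You compute $[-,-]_{T_j}$ explicitly and get $[\g_1,\g_1]\subseteq\langle z\rangle$ with $z$ central. For $\g_2$ and $\g_3$ you show the derived algebra is abelian, and use a bracket such as $[x,z]_{T_3}=cz$ to keep the lower central series from vanishing; your eigenvalue formulation and your genericity caveat make the paper's terse ``$z$ is not central'' argument precise.

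One correction: $[e,z]_{T_2}=-bz\neq 0$, so your proposed check $[e,z]_{T_j}=0$ fails for $j=2$. Solvability of $\g_2$ comes instead from $[\g_2,\g_2]\subseteq\langle z\rangle$ being one-dimensional. That same bracket, or $[x,z]_{T_2}=-bz$, is then your non-nilpotency witness whenever $b\neq 0$.
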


\begin{proof}
Let us first look at the Hom-Lie algebra $\g_1$. By Theorem \ref{thm2.12}, a direct verification shows that all nonzero bracket 
products are given by
$$[e,y]=(b-a)z \textrm{ and }[x,y]=(b-a)z.$$
Note that $z$ belongs to the center of $\g_1$, thus $\g_1$ is a nilpotent Hom-Lie algebra of class $2$. 

Similarly, we use Theorem \ref{thm2.12}. In $\g_2$, we observe that $[\g_2,\g_2]$ is contained in the subspace $\{z\}$ and the bracket product of any two of $\{e,x,y,z\}$ is nonzero, thus $z$ is not a central element and $\g_2$ is solvable but not nilpotent. For $\g_3$, as
$[\g_3,\g_3]$ is contained in the subspace $\{e,z\}$ and $z$ is not a central element (because $[x,z]=cz$), it follows that 
$\g_3$ is also solvable but not nilpotent.
\end{proof}

We summarize the geometric structures of $\B^s(L)$ and the Hom-Lie algebra structures induced by $\B^s(L)$ in the following Table \ref{tab3}, where $L$ denotes a $4$-dimensional non-Lie complex $\upomega$-Lie algebra and 
$\g_j$ represents the Hom-Lie algebra induced by the $j$-th irreducible component of $\B^s(L)$.

\begin{center}
\renewcommand{\arraystretch}{1.3}
\begin{longtable}{c|c|c|c|c|c|c}
$L$  &  $\dim(\B^s(L))$ &  $\#$ Irred. & $\g_1$  & $\g_2$ & $\g_3$ & $\g_4$    \\  \hline
$L_{1,1}$ & 6 & 1 & solvable &&& \\ \hline
$L_{1,2}$ & 5 & 1 & solvable &&&\\ \hline
$L_{1,3}$ & 4 & 4 & nilpotent & solvable & solvable & non-solvable \\ \hline
$L_{1,4}$ & 4 & 4 & nilpotent  & solvable  &  solvable & non-solvable  \\ \hline
$L_{1,5}$ & 4 & 4  & nilpotent & solvable   &  solvable  & solvable  \\ \hline
$L_{1,6}$ & 4  &4  & nilpotent & solvable  & solvable & non-solvable  \\ \hline
$L_{1,7}$ & 4 & 4 &  nilpotent & solvable  & solvable   & non-solvable  \\ \hline
$L_{1,8}$ & 4 & 3 & nilpotent & solvable &solvable & \\ \hline
$E_{1,\upalpha}~(\upalpha\neq 0,1)$ & 4 & 4 & nilpotent  & solvable  & solvable & solvable \\ \hline
$F_{1,\upalpha}~(\upalpha\neq 0,1)$ & 4 & 2 & solvable & non-solvable & & \\ \hline
$G_{1,\upalpha}$ & 3 & 4 & nilpotent   & non-solvable  &non-solvable  &non-solvable \\ \hline
$H_{1,\upalpha}$ & 3 & 4 & nilpotent & non-solvable  & non-solvable  & non-solvable  \\ \hline
$L_{2,1}$ & 4  & 4 & nilpotent & solvable  & solvable   & non-solvable \\ \hline
$L_{2,2}$ & 3  & 4 & abelian & nilpotent &  solvable     & non-solvable \\ \hline
$L_{2,3}$ & 4 & 4 & nilpotent & solvable  &  solvable & non-solvable   \\ \hline
$L_{2,4}$ & 3 & 4  & abelian & nilpotent & solvable    & non-solvable  \\ \hline
$\widetilde{A}_{\upalpha}$ & 3 & 4 & nilpotent & non-solvable  & non-solvable  &non-solvable   \\ \hline
$\widetilde{B}$ & 3 & 4 & nilpotent &solvable    &  solvable  & solvable  \\ \hline
$\widetilde{C}_{\upalpha} ~(\upalpha\neq 0,-1)$ & 3 & 4 & nilpotent & nilpotent   &  solvable  & solvable   \\ \hline
\caption{$\B^s(L)$ in Dimension 4} \label{tab3}
\end{longtable}
\end{center}
\vspace{-0.8cm}

By Table \ref{tab3}, we obtain the following corollary. 

\begin{coro}\label{coro4.7}
Let $L$ be a $4$-dimensional non-Lie $\upomega$-Lie algebra over $\C$. Then there always exists a   
compatible Rota-Baxter operator $R$ of weight $0$ with $R^2=0$ such that the Hom-Lie algebra $(L,[-,-]_R,R)$ induced by $R$ via Theorem $(\ref{thm2.12})$ is nonabelian  and solvable. 
\end{coro}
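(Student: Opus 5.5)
The plan is a case-by-case verification over the classification of $4$-dimensional non-Lie complex $\upomega$-Lie algebras in \cite[Corollary 1.6]{CZ17}, using Table \ref{tab3}. For each algebra $L$ in the table, I will pick one irreducible component $V(\p_j)$ of $\B^s(L)$ whose induced Hom-Lie algebra $\g_j$ is listed as solvable or nilpotent, but not abelian. Any such $R$ lies in $\B^c(L)$ with $R^2=0$, so Theorem \ref{thm2.12} makes $(L,[-,-]_R,R)$ a Hom-Lie algebra. It then remains to choose a specific parameter point on the component where the bracket $[-,-]_R$ is not identically zero.

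\textbf{Step 1: reading off the components.} The table gives such a component for every $L$:
\begin{itemize}
\item For $L_{1,1}$ and $L_{1,2}$, the unique component is solvable.
\item For $F_{1,\upalpha}$, take $\g_1$.
\item For $G_{1,\upalpha}$, $H_{1,\upalpha}$ and $\widetilde{A}_{\upalpha}$, only $\g_1$ is nilpotent.
\item For $L_{2,2}$ and $L_{2,4}$, take $\g_2$ (nilpotent) or $\g_3$ (solvable).
\item In all remaining cases, take $\g_2$.
\end{itemize}
Since nilpotent implies solvable, each chosen $\g_j$ is solvable. The only cases needing care are those whose only available component is labelled "nilpotent" rather than abelian: $G_{1,\upalpha}$, $H_{1,\upalpha}$, $\widetilde{A}_{\upalpha}$. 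Here I read "nilpotent" in the table as meaning non-abelian nilpotent, in analogy with the explicit $L_{1,8}$ computation, where $\g_1$ has $[e,y]=(b-a)z$, which is nonzero whenever $a\neq b$.

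\textbf{Step 2: nonabelianness.} For each chosen component, I will write the generic $R$ explicitly, as was done for $R_3$ on $L_{1,2}$ and for $T_j$ on $L_{1,8}$. I will compute one bracket $[u,v]_R=[R(u),v]+[u,R(v)]$ on basis elements and check that it is a nonzero polynomial in the parameters. For example, for $L_{1,2}$, Proposition \ref{prop4.3} gives $[x,z]=dz$. Choosing $d\neq0$ and solving (\ref{eq4.2}) (e.g. $c=0,t=0$, with suitable $b,s,u$) yields a valid point with nonzero bracket. Because each component is irreducible and the bracket coefficient is a nonzero regular function on it, a generic point works.

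\textbf{Main obstacle.} The main obstacle is the nilpotent-only cases, together with certifying that each component actually has a point off the zero locus of all bracket coefficients. The component might in principle consist entirely of operators with $[-,-]_R\equiv0$, and then it would be "nilpotent" only trivially. For each of $G_{1,\upalpha}$, $H_{1,\upalpha}$ and $\widetilde{A}_{\upalpha}$, I would first try to exhibit an explicit $R$ with $R^2=0$, image inside an $\upomega$-isotropic subspace, and a single nonzero matrix entry. I would verify (\ref{rbc}) and (\ref{comp}) directly and compute one nonzero bracket, after which solvability follows from the $\g_1$ entry of Table \ref{tab3}.
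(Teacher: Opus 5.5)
Your proposal is correct and follows the paper's argument, which consists solely of reading off Table~\ref{tab3}: every row has a component labelled ``nilpotent'' or ``solvable'' (rather than ``abelian'' or ``non-solvable''), and your row-by-row choice of component matches the table. The paper relies on the same convention you adopt --- that ``nilpotent'' and ``solvable'' entries are non-abelian, since ``abelian'' is listed separately --- so your Step~2 (for instance the valid point $d=1$, all other parameters $0$, on $R_3$ for $L_{1,2}$) is an extra explicit check that the paper does not carry out.
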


\vspace{3mm}
\noindent \textbf{Acknowledgements}.  
This research was partially supported by a research project from the Liaoning Provincial Department of Education (Grant No. LJ232510166002) and NNSF of China (Grant No. 12561003).
The authors would like to thank the anonymous referee for his/her careful reading, constructive comments, and suggestions.


\raggedright
\end{document}